\DeclareMathOperator{\GL}{GL}
\DeclareMathOperator{\PG}{PG}
\DeclareMathOperator{\Aut}{Aut}
\DeclareMathOperator{\ind}{ind}
\def\D{{\mathcal D}}
\def\F{{\mathcal F}}
\def\P{{\mathcal P}}
\def\fano#1#2#3#4#5#6#7{
    \footnotesize
    \setlength{\unitlength}{0.0003in}
    \begin{picture}(4066,4500)(0,-250)
      \put(2033,1283){\ellipse{2400}{2400}}
      \put(83,83){\circle*{200}}
      \put(2033,83){\circle*{200}}
      \put(3983,83){\circle*{200}}
      \put(983,1883){\circle*{200}}
      \put(2033,1253){\circle*{200}}
      \put(3083,1883){\circle*{200}}
      \put(2033,3983){\circle*{200}}
      \path(2033,3983)(3983,83)(83,83)(2033,3983)
      \path(3983,83)(983,1883)
      \path(2033,3983)(2033,83)
      \path(83,83)(3083,1883)
      \put(-150,-300){$#1$}
      \put(4050,-300){$#2$}
      \put(2200,4000){$#3$}
      \put(1950,-300){$#4$}
      \put(3300,1800){$#5$}
      \put(500,1800){$#6$}
      \put(2250,1150){$#7$}      
    \end{picture}
}
\theoremstyle{plain}
\newtheorem{theorem}{Theorem}
\newtheorem{prop}[theorem]{Proposition}
\newtheorem{lemma}[theorem]{Lemma}
\theoremstyle{definition}
\newtheorem{exmp}[theorem]{Example}
\newtheorem{rem}[theorem]{Remark}
\begin{document}
\title{The full automorphism groups of the five symmetric $(15,8,4)$-designs}
\author{Mark Pankov, Krzysztof Petelczyc, Mariusz \.Zynel}
\keywords{symmetric design, point-line geometry, simplex code, partial Steiner triple system}
\subjclass[2020]{51E20,51E22}
\address{Mark Pankov: Faculty of Mathematics and Computer Science, 
University of Warmia and Mazury, S{\l}oneczna 54, 10-710 Olsztyn, Poland}
\email{pankov@matman.uwm.edu.pl}
\address{Krzysztof Petelczyc, Mariusz \.Zynel: Faculty of Mathematics, University of Bia\-{\l}y\-stok, Cio{\l}kowskiego 1M, 15-245 Bia{\l}ystok, Poland}
\email{kryzpet@math.uwb.edu.pl, mariusz@math.uwb.edu.pl}

\begin{abstract}
It is clear that the full automorphism group of the $(15,8,4)$-design of points and hyperplane complements of $\PG(3,2)$ is $\GL(4,2)$. 
Using methods of point-line geometries, we determine the full automorphism groups of the remaining four symmetric $(15,8,4)$-designs
and describe their actions on the sets of points and blocks.
\end{abstract}

\maketitle

\section{Introduction}
It was shown in \cite{Nandi} that there are precisely five pairwise non-isomorphic symmetric $(15,7,3)$-designs 
(they are presented, for example, in \cite[p.11, Table 1.23]{CD}). The same holds for symmetric $(15,8,4)$-designs,
since they are the complements of $(15,7,3)$-designs. 
One of the $(15,7,3)$-designs is the design of points and hyperplanes of $\PG(3,2)$
and its full automorphism group is $\GL(4,2)$. 
Note that  its complement can be characterized as a  unique symmetric $(15,8,4)$-design admitting a flag-transitive, point-imprimitive subgroup of automorphisms
\cite[Proposition 1.5]{PZ} (previously, it was shown in \cite[Subsection 1.2.2]{OR} that there is an automorphism group acting flag-transitive, point-imprimitive on this design).
In this paper, we determine the full automorphism groups of 
the remaining four designs (it is clear that a designs and its complement have the same automorphism group)
and describe their actions on the sets of points and blocks.

Point-line geometries (more precisely, partial Steiner triple systems) related to binary equidistant codes are investigated in \cite{PPZ1}. 
Some of maximal cliques in the collinearity graphs of these geometries are the sets of blocks of symmetric designs.
For the geometry whose maximal singular subspaces correspond to 
binary simplex codes of dimension $k$ (recall that such codes are of length $2^k-1$)
a clique of the collinearity graph contains at most  $2^k-1$ vertices.
Furthermore, $(2^k-1)$-cliques are precisely the sets of blocks of  symmetric $(2^k-1,2^{k-1},2^{k-2})$-designs.  

The five symmetric $(15,8,4)$-designs (the case when $k=4$) admit a simple description in terms of points and lines \cite{PPZ2}.
Four of them are centered, i.e.\ the set of blocks is the set of all points on seven lines of $\PG(15,2)$ passing through a common point. 
The remaining non-centered design is dual to one of the centered designs.
Since a design and its dual have the same full automorphism group, we restrict ourself to centered designs only.

\section{Main result}
The set of points of $\PG(n-1,2)$ can be identified with the set of non-empty subsets of $[n]=\{1,\dots,n\}$ as follows:
every non-zero vector $x=(x_1,\dots,x_n)\in {\mathbb F}^n_2$ (where ${\mathbb F}_2$ is the two-element field)
corresponds to the subset of all $i\in [n]$ such that the $i$-th coordinate of $x$ is non-zero. 
Then the third point on the line of $\PG(n-1,2)$ joining $X,Y\subset [n]$ is the symmetric difference $X\triangle Y$.  

For every $m\in [n]$ satisfying $3m\le n$ denote by $\P_m(n)$ the subgeometry of $\PG(n-1,2)$
formed by all $2m$-element subsets of $[n]$ (points of Hamming weight $2m$);
lines of $\P_m(n)$ are precisely the lines of $\PG(n-1,2)$ contained in $\P_m(n)$.
Note that the set of $2m$-subsets of $[n]$ contains no line if $3m>n$;
the same holds for any subset of $\PG(n-1,2)$ formed by all points of fixed odd Hamming weight.

Two distinct points $X,Y$ ($2m$-element subsets of $[n]$) are collinear (connected by a line) in $\P_m(n)$
if and only if $X\triangle Y$ consists of $2m$ elements or, equivalently, $X\cap Y$ consists of $m$ elements.

Every permutation on $[n]$ induces an automorphism of $\P_m(n)$
(a bijective transformation of $\P_m(n)$ preserving the set of lines).
In the cases when $n=4m-1,4m$, there are exceptional automorphisms of $\P_m(n)$
which are not induced by permutations. See \cite{PPZ1} for a full description of automorphisms  of $\P_m(n)$.

A {\it singular subspace} of $\P_m(n)$  is a subset, where any two distinct points are collinear and the line joining them is contained in this subset. 
Every singular subspace is a projective space.

\begin{exmp}\label{exmp-SS}
Suppose that $n=2^k-1$ and $m=2^{k-2}$. Then every maximal singular subspace of $\P_m(n)$ is isomorphic to $\PG(k-1,2)$
and corresponds to a certain binary simplex code of dimension $k$, see \cite{PPZ1} for the details.
Every automorphism of this subspace is induced by a unique permutation on $[n]$.
This follows from the fact that every $i\in [n]$ is contained in precisely $2m$ points of the subspace 
and their intersection is the one-element set $\{i\}$.
\end{exmp}

The {\it collinearity graph} of $\P_m(n)$ is the simple graph whose vertices are points of $\P_m(n)$
and two distinct vertices are connected by an edge if they are collinear points. 
By Fisher's inequality (see, for example, \cite{MM}),
every clique of the collinearity graph contains at most $n$ vertices. 
It was observed in \cite{PPZ1} that every $n$-clique (a clique consisting of $n$ vertices) is the set of blocks of a certain symmetric 
$(n,2m,m)$-design. 

Recall that a {\it symmetric $(n,2m,m)$-design} is an incidence structure formed by $n$ points and $n$ blocks such that
each block consists of $2m$ points and the intersection of two distinct blocks contains precisely $m$ points.
{\it Design isomorphisms} are block preserving bijections between the point sets of designs 
and {\it design automorphisms} are isomorphisms of a design to itself. 
The full automorphism group of a design $\D$ is denoted by $\Aut(\D)$. 
A design and its dual 
(derived by swapping the roles of points and blocks) 
have the same full automorphism group.

It is well-known that there are precisely five mutually non-isomorphic symmetric $(15,8,4)$-designs
(their complements are listed, for example,  in \cite[p.11, Table 1.23]{CD}).
Following \cite{PPZ2} we describe the sets of blocks of these five designs as subsets of $\PG(15,2)$ which are $15$-cliques of 
the collinearity graph of $\P_{4}(15)$:
\begin{enumerate}
\item[(C1)] A maximal singular subspace of $\P_{4}(15)$
(it is isomorphic to $\PG(3,2)$). The corresponding design is isomorphic to the design of points and hyperplane complements of $\PG(3,2)$ \cite[Proposition 1]{PPZ2}.
\item[(C2)] Three Fano planes through a  certain line 
(a line of $\P_{4}(15)$ is contained in this subset if and only if it is a line in one of these Fano planes).
\item[(C3)] A Fano plane and four lines through a certain point 
(a line of $\P_{4}(15)$ is contained in this subset if and only if it is a line in the Fano plane or one of these four lines).
\item[(C4)] Seven lines through a point 
(a line of $\P_{4}(15)$ is contained in this subset if and only if it is one of these seven lines).
\item[(NC)]  The union of a maximal singular subspace of $\P_4(15)$ (which is isomorphic to $\PG(3,2)$) with a Fano plane deleted
and a Fano plane disjoint with this subspace 
(a line of $\P_{4}(15)$ is contained in this subset if and only if it is a line of the Fano plane).
\end{enumerate}
The first four cliques are called {\it centered}; each of them contains a {\it center point} (not necessarily unique), i.e.\ 
a point $O$ such that  for any different point belonging to the clique  the line joining this point and $O$ is contained in the clique. 
In a clique of the type (C1), each point is a center point. In a clique of the type (C2), center points form a line
(which is the intersection of the three Fano planes).
A clique of the type (C3) or (C4) contains a unique center point. 
Cliques of type (NC) are non-centered. 

We say that a symmetric $(15,8,4)$-design is of type ${\rm A}\in \{\rm{(C1),\dots, (C4), (NC)}\}$ if the blocks of this design 
form a clique of type ${\rm A}$. Since every permutation on $[n]$ induces an automorphism of the geometry $\P_m(n)$,
designs of different types are non-isomorphic. On the other hand, there are precisely five mutually non-isomorphic symmetric $(15,8,4)$-designs
and, consequently, two such designs are isomorphic if and only if they are of the same type. 

Designs of the types (C1), (C2), (C3) are self-dual which means that the incidence matrices of such designs are symmetric, see \cite[Fig.~5-7]{PPZ2}.
On the other hand, designs of the type (C4) are dual to designs of the type (NC) and vice versa, 
i.e.\ the incidence matrices for one of these types are the transposes of 
the incidence matrices for the other, see \cite[Fig.~8,9]{PPZ2}.

It is clear that the full automorphism group of a design of the type (C1) is $\GL(4,2)$.
We determine the full automorphism groups for designs of the remaining four types. 
Designs of the types (C4) and (NC) have the same full automorphism group (as dual)
and we restrict ourself to the cases (C2)-(C4).

\begin{theorem}\label{theorem-main}
The following assertions are fulfilled:
\begin{itemize}
\item
If\/ $\D$ is a design  of the type {\rm (C2)}, then $\Aut(\D)$
is the semidirect product of a normal subgroup isomorphic to $C^2_2\rtimes S_4$ and a subgroup isomorphic to $S_3$.
\item
If\/ $\D$ is a design  of the type {\rm (C3)}, then $\Aut(\D)$
is the semidirect product of a normal subgroup isomorphic to $C^3_2$ and 
a subgroup isomorphic to $A_4$.
\item 
If\/ $\D$ is a design of the type  {\rm (C4)} or {\rm (NC)}, then $\Aut(\D)$
is the semidirect product of a normal subgroup isomorphic to $C^3_2$ and 
a subgroup isomorphic to $C_7{\rtimes}C_3$.
\end{itemize}
\end{theorem}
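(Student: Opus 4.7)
My plan is to exploit the fact that any automorphism of $\D$ is a permutation of $[15]$, so $\Aut(\D)$ coincides with the setwise stabilizer in $S_{15}$ of the 15-clique of blocks inside $\P_4(15)$. In each of the three cases I would identify the intrinsic features of the clique — its lines (triples of blocks summing to zero in $\mathbb F_2^{15}$), its centre points, and any Fano planes it contains — and use these to set up a short exact sequence
\[
1\longrightarrow N\longrightarrow \Aut(\D)\longrightarrow Q\longrightarrow 1
\]
with $N\cong C_2^k$ the claimed normal subgroup and $Q$ the claimed quotient; I would then exhibit explicit permutations of $[15]$ that both generate $N$ and provide a splitting of the sequence.

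For (C2), the three Fano planes $F_1,F_2,F_3$ and their common line $\ell$ (which is exactly the set of centre points) are intrinsic, so the action on $\{F_1,F_2,F_3\}$ yields a surjection $\Aut(\D)\to S_3$. Its kernel $K$ stabilizes each $F_i$ setwise and induces on each an element of the $\GL(3,2)$-stabilizer of $\ell$, a subgroup isomorphic to $S_4$; the three $S_4$-actions are coupled through their common restriction to $\ell$. A careful count using the explicit $[15]$-model of \cite{PPZ2} yields $K\cong C_2^2\rtimes S_4$, with the normal $C_2^2$ realised by "diagonal" swap-involutions acting trivially on one plane and by a common Fano-translation on the other two.

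For (C3), the unique Fano plane $F$ and the unique centre $O\in F$ are intrinsic, so $\sigma$ fixes $F$ setwise and $O$ pointwise. On $F$, $\sigma$ acts through the $\GL(3,2)$-stabilizer of $O$, which is isomorphic to $S_4$, and on the four extra lines through $O$ it induces a permutation in $S_4$; a compatibility argument cuts the image on the four extra lines down to $A_4$, and the common kernel is a $C_2^3$ of internal swaps. For (C4), the centre $O$ is intrinsic, and the seven lines through $O$ are permuted by $\Aut(\D)$. Using the $[15]$-model of \cite{PPZ2} the $\bar O$-parts of the non-centre blocks are seen to form a Fano plane on $\bar O=[15]\setminus O$, while the $O$-parts give the $14$ affine hyperplanes of an $\mathbb F_2^3$-structure on $O$; the matching between parallel classes on $O$ and Fano lines on $\bar O$ (both indexed by the seven lines through $O$) cuts the image on the seven lines down from the full $\GL(3,2)$ to the Frobenius subgroup $F_{21}=C_7\rtimes C_3$, and the kernel is again a $C_2^3$ of internal swaps.

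The main obstacle in each case is identifying the correct kernel $N\cong C_2^k$: a priori every independent flip of a line pair appears plausible, but only those induced by an actual permutation of $[15]$ survive, and proving these form exactly $C_2^k$ requires the explicit combinatorial model of the clique from \cite{PPZ2} together with the uniqueness of permutations inducing a given subspace automorphism (Example~\ref{exmp-SS}). Equally delicate is cutting the potentially larger quotients down to size — from $S_4$ to $A_4$ in (C3) and from $\GL(3,2)$ to $F_{21}$ in (C4) — which depends on matching/compatibility constraints specific to each configuration.
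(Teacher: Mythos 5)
Your outline identifies the right target in each case (an elementary abelian normal subgroup of ``swaps'' plus a complement detected through the action on intrinsic features of the clique), and this is broadly the same skeleton the paper uses. However, the two steps that carry all the mathematical weight are asserted rather than proved, and they are exactly the points where the paper has to work. First, the kernel: an automorphism preserving every line of the clique through a center point $O$ either fixes or transposes the two non-central blocks on each of the seven such lines, so a priori there are $2^7$ candidate ``swap patterns''; you must show that precisely $8$ of them are induced by actual permutations of $[15]$ and that they close up into a $C^3_2$. Your phrase ``a careful count using the explicit $[15]$-model'' does not do this. The paper does it by constructing, for each line $L$ of an auxiliary Fano plane $\F_Z$ on a $7$-subset $Z\subset O$, an explicit involution $\alpha_L$ of $[15]$, verifying $\alpha_{L_1}\alpha_{L_2}=\alpha_{L_3}$ for concurrent lines, and then proving (Propositions \ref{prop-FG} and \ref{prop-autO}) that every automorphism fixing $O$ factors as $\alpha_L\cdot g$ with $g$ preserving a chosen $\F_Z$; this last step is what rules out the remaining $2^7-8$ patterns and simultaneously produces the complement.

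Second, the complement: cutting the quotient down from $S_4$ to $A_4$ in case (C3) and from $\GL(3,2)$ to $C_7{\rtimes}C_3$ in case (C4) is not a routine ``compatibility argument''; in the paper it is the whole of Section 3, where the group $G_{\delta}$ of common automorphisms of $\F$ and $\F_{\delta}$ is computed for each index $\ind(\delta)\in\{0,1,3\}$ by exhibiting explicit generators (commuting $C_3$- and $C_7$-elements for index $0$, the Klein four-group $\{e,g_x,g_y,g_z\}$ together with a $3$-cycle for index $1$, an $S_3$ plus an extra involution for index $3$) and then invoking the list of maximal subgroups of $\GL(3,2)$. Without an argument of this kind you have only upper bounds ($S_4$, respectively $\GL(3,2)$) and no lower bounds, so you cannot conclude that the complement is $A_4$ or $C_7{\rtimes}C_3$ rather than a proper subgroup, nor that the extension splits. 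In case (C2) you likewise need to exhibit an actual $S_3$ of automorphisms permuting the three center points --- a surjection onto $S_3$ alone does not yield a semidirect product decomposition; the paper obtains this subgroup from two involutions $\alpha_L,\alpha_{L'}$ attached to two different center points (Lemma \ref{lemmaS3}). So the proposal is a plausible road map, but the decisive lemmas are missing.
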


The following interpretation of the group $C^3_2$ as a Fano plane plays an important role in our description.

\begin{rem}\label{remF}
Recall that $C^3_2$ is an abelian group generated by three involutions.  
It contains precisely $7$ non-identity elements and each of them is an involution. 
There is a one-to-one correspondence  between non-identity elements of $C^3_2$ and points of a Fano plane:
an involution $\alpha$ corresponds to a point $p_{\alpha}$ such that
for distinct involutions $\alpha,\beta$ the point $p_{\alpha\beta}$ is the third point on the line joining $p_{\alpha}$ and $p_{\beta}$.
Every maximal subgroup of $C^3_2$  is $C^2_2$ (generated by two involutions) and there is a one-to-one correspondence between 
such subgroups and lines of the Fano plane. 
We will use the dual presentation of $C^3_2$: 
every involution $\alpha$ corresponds to a line $L_{\alpha}$ such that for distinct involutions $\alpha,\beta$
the line $L_{\alpha\beta}$ is the third line through the common point of the lines $L_{\alpha},L_{\beta}$.
\end{rem}

Now, let us briefly describe how we intend to determine the full automorphism groups of our designs.

Designs of the types (C1)-(C4) can be obtained from  bijective transformations of Fano planes
and there is a one-to-one correspondence between the types of designs  and equivalence classes of such transformations. 
These equivalence classes are determined by the number of lines which go to lines under transformations.
Every bijective transformation $\delta$ of a Fano plane $\F$ defines a new Fano plane  $\F_{\delta}$ whose points are the points of $\F$
and whose lines are the images of lines under $\delta$. 
The group $G_{\delta}$ formed by all common automorphisms of $\F$ and $\F_{\delta}$ will be described in Section 3. 

Let $\D$ be a design of one of the types (C1)-(C4).
Then the set of blocks ${\mathcal B}$ is a centered $15$-clique of the collinearity graph of $\P_4(15)$. 
For every center point $O\in {\mathcal B}$ the group $\Aut(\D,O)$
consisting of all automorphisms of $\D$ preserving $O$ contains a normal subgroup isomorphic to $C^3_2$. 
This subgroup leaves every element of $[15]\setminus O$ fixed and acts sharply transitively on $O$.
The group $\Aut(\D,O)$ is the semidirect product of this normal subgroup and a subgroup isomorphic to $G_{\delta}$,
where $\delta$ is the Fano plane transformation related to $\D$ (Subsection 4.2).
If $\D$ is of the type (C3) or (C4), then $\Aut(\D,O)$ coincides with $\Aut(\D)$.

The case (C2) is considered in Subsection 4.3. 
The center points of ${\mathcal B}$ form a line ${\mathcal O}$ and 
the group $\Aut(\D,{\mathcal O})$ consisting of all automorphisms of $\D$ preserving each center point 
is a normal subgroup of $\Aut(\D)$. It is the semidirect product of a normal subgroup isomorphic to $C^2_2$
and a subgroup isomorphic to $G_{\delta}$
(as above, $\delta$ is the Fano plane transformation related to $\D$). 
The group $\Aut(\D)$ is the semidirect product of the normal subgroup $\Aut(\D,{\mathcal O})$
and a subgroup isomorphic to $S_3$.

The action of the full automorphism group of a design of the type (C1) is flag-transitive. 
The orbits of the action of the full automorphism group  for the remaining four designs
will be determined in Section 5.

\section{Subgroups of \texorpdfstring{$\GL(3,2)$}{GL(3,2)} related to bijections of Fano planes}

\subsection{Some technical results}
The description of centered $15$-cliques in the col\-linearity graph of $\P_4(15)$
is based on a classification of bijective maps of Fano planes.

Let $\delta$ be a bijection between Fano planes $\F$ and $\F'$.
The {\it index} $\ind(\delta)$ is the number of lines in $\F$ whose images under $\delta$ are lines in $\F'$.
By \cite[Subsection 4.2]{PPZ2}, the index is equal to $0,1,3$ or $7$ ($\delta$ is an isomorphism  in the last case);
furthermore, a bijection $\tilde\delta:\F\to \F'$ is equivalent to $\delta$
(there are automorphisms $f$ of $\F$ and $g$ of $\F'$ such that 
$\tilde\delta=g \delta f$) if and only if $\delta, \tilde\delta$ are of the same index.  

The preimage of every line in $\F'$ under $\delta$ is said to be a {\it$\delta$-line} in $\F$.
The points of $\F$ together with all $\delta$-lines form a Fano plane which will be denoted by $\F_{\delta}$. 
Then $\delta$ is an isomorphism between the Fano planes $\F_{\delta}$ and $\F'$.
We have $\F=\F_{\delta}$ if and only if $\ind(\delta)=7$ ($\delta$ is an isomorphism). 

The full automorphism group of  $\F$ is $\GL(3,2)$.
Denote by $G_{\delta}$ the group formed by  all automorphisms of  $\F$
which are also automorphisms of  $\F_{\delta}$. 

\begin{lemma}\label{lemma-d}
An automorphism $f$ of $\F$  belongs to $G_{\delta}$
if and only if $\delta f\delta^{-1}$ is an automorphism of $\F'$. 
\end{lemma}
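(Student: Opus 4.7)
The plan is to exploit the observation, already implicit in the definition of $\F_{\delta}$, that $\delta$ is by construction an isomorphism of Fano planes $\F_{\delta}\to\F'$. Indeed, the lines of $\F_{\delta}$ were defined to be the $\delta$-preimages of the lines of $\F'$, so $\delta$ is a bijection between the point sets of $\F_{\delta}$ and $\F'$ that sends the line set of $\F_{\delta}$ onto the line set of $\F'$. Once this is recorded, the lemma becomes a standard statement about how conjugation by an isomorphism identifies automorphism groups.

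Concretely, I would first note that since $\delta\colon\F_{\delta}\to\F'$ is an isomorphism, the conjugation map $h\mapsto \delta h\delta^{-1}$ is a group isomorphism $\Aut(\F_{\delta})\to\Aut(\F')$. Hence, for \emph{any} bijection $f$ of the point set of $\F$ (which is the point set of $\F_{\delta}$), the bijection $f$ preserves the lines of $\F_{\delta}$ if and only if $\delta f\delta^{-1}$ preserves the lines of $\F'$.

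Next I would apply this observation to an $f$ which is already assumed to be an automorphism of $\F$. By definition, such an $f$ belongs to $G_{\delta}$ precisely when it is also an automorphism of $\F_{\delta}$, and by the previous step this happens if and only if $\delta f\delta^{-1}\in\Aut(\F')$. This gives both implications at once.

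There is essentially no serious obstacle here; the only thing to be careful about is unwinding the definition of $\F_{\delta}$ correctly so that the assertion ``$\delta$ is an isomorphism $\F_{\delta}\to\F'$'' is justified before the conjugation argument is invoked. Once that is in place, the proof is a one-line verification.
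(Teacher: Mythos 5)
Your argument is correct and is essentially the paper's own proof: the paper likewise unwinds the definition of $\delta$-lines to check directly that $f$ sends $\delta^{-1}(L)$ to $\delta^{-1}(L')$ exactly when $\delta f\delta^{-1}$ sends $L$ to $L'$, which is precisely your conjugation observation stated concretely. Packaging it as ``$\delta\colon\F_{\delta}\to\F'$ is an isomorphism, so conjugation identifies the automorphism groups'' is a harmless rephrasing, not a different route.
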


\begin{proof}
If $f\in G_{\delta}$, then for every line $L\subset \F'$ it sends $\delta^{-1}(L)$ to $\delta^{-1}(L')$, where $L'$ is a certain line of $\F'$.
This means that $\delta f\delta^{-1}$ transfers $L$ to $L'$, i.e.\ it preserves the set of lines of $\F'$.

Conversely, if $\delta f\delta^{-1}$  is an automorphism of $\F'$, then  $\delta f\delta^{-1}(L)=L'$ is a line for every line $L\subset \F'$.
Therefore, $f$ sends $\delta^{-1}(L)$ to $\delta^{-1}(L')$, i.e.\ it preservers the set of $\delta$-lines.
\end{proof}

It is clear that $G_{\delta}$ coincides with  $\GL(3,2)$ if $\ind(\delta)=7$. 
We describe $G_{\delta}$ for the cases when $\ind(\delta)\in \{0,1,3\}$. 

\begin{prop}\label{prop-d}
The following assertions are fulfilled:
\begin{itemize}
\item If\/ $\ind(\delta)=0$, then $G_{\delta}$ is a maximal subgroup of $\GL(3,2)$ isomorphic to $C_7{\rtimes}C_3$.
\item If\/ $\ind(\delta)=1$, then $G_{\delta}$ is a subgroup of $\GL(3,2)$ isomorphic to $A_4$. 
\item If\/ $\ind(\delta)=3$, then $G_{\delta}$ is a maximal subgroup of $\GL(3,2)$ isomorphic to $S_4$.
\end{itemize}
\end{prop}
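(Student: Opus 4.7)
The plan is to work inside $S_7$ after identifying $\F$ and $\F_\delta$ on a common 7-point set, so $G_\delta = \Aut(\F) \cap \Aut(\F_\delta)$, and to treat each value of $\ind(\delta) \in \{0,1,3\}$ separately. By the equivalence classification in \cite[Subsection 4.2]{PPZ2}, a single representative of each index class suffices. In each case I first pin down the invariant structure preserved by $G_\delta$, use it to bound $|G_\delta|$, and then identify the group via the small subgroup lattice of $\GL(3,2)$, whose subgroup orders are $1,2,3,4,6,7,8,12,21,24,168$, with $S_4$ and $C_7 \rtimes C_3$ as the maximal subgroups.

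Case $\ind(\delta)=3$: The key step is to show the three common lines must be concurrent. A short combinatorial argument shows that three non-concurrent lines (a triangle with non-collinear vertices) determine the Fano completion uniquely: the remaining four lines are forced, so no second Fano plane can share such a triple. Three concurrent lines through a point $p$, by contrast, admit exactly two completions corresponding to the two triangle decompositions of the associated octahedron $K_{2,2,2}$ on the six off-$p$ points. Hence $G_\delta$ fixes $p$ and $G_\delta \subseteq \Aut(\F)_p \cong S_4$; conversely, the subgroup of $\Aut(K_{2,2,2}) \cong S_2 \wr S_3$ stabilizing one decomposition coincides with the one stabilizing the other, so $\Aut(\F)_p = \Aut(\F_\delta)_p$, and $G_\delta \cong S_4$.

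Case $\ind(\delta)=1$: The unique common line $L$ is setwise fixed by $G_\delta$, so $G_\delta \subseteq \Aut(\F)_L \cong S_4$. I parameterize $\Aut(\F)_L$ via its action on the four off-$L$ points (all of $S_4$), with induced action on $L$ through the natural quotient $S_4 \twoheadrightarrow S_3$ and the identification of $L$ with the three partitions of the off-$L$ points into two pairs determined by the lines of $\F$ through each point of $L$. The Fano plane $\F_\delta$ yields a second such identification, differing from that of $\F$ by some $\tau \in S_3$, and membership in $G_\delta$ becomes the condition that the $S_3$-image of the element commutes with $\tau$. Because $\ind(\delta)=1$ rules out any fixed point of $\tau$ (such a fixed point would yield two extra common lines through a point of $L$), $\tau$ is a 3-cycle, whose centralizer in $S_3$ is $C_3$ with preimage $A_4$ in $S_4$; hence $G_\delta \cong A_4$.

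Case $\ind(\delta)=0$: With no common line or invariant point available, I proceed by orbit counting. There are $7!/168 = 30$ Fano-plane structures on the 7-point set, and subtracting $\F$ itself, the $7$ planes with $\ind=3$, and the $14$ planes with $\ind=1$ leaves exactly $8$ planes with $\ind=0$. Under the $\Aut(\F)$-action on these $8$, orbit sizes must divide $168$ and sum to $8$: no fixed points exist (they would force $\F_\delta=\F$); orbit sizes $2,3,4,6$ would yield stabilizer orders $84,56,42,28$, none of which are subgroup orders in $\GL(3,2)$; and an orbit of size $7$ would make $G_\delta$ a point or line stabilizer, contradicting $\ind=0$ (forcing a common concurrent triple or a common line, respectively). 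The only possibility is a single orbit of size $8$, giving $|G_\delta|=21$ and $G_\delta \cong C_7 \rtimes C_3$, the unique subgroup of that order (the normalizer of a Sylow $7$-subgroup). I expect the main difficulty to be the rigidity argument in Case $\ind=3$ and the exhaustive case elimination in Case $\ind=0$; both reduce to careful accounting against the subgroup lattice of $\GL(3,2)$.
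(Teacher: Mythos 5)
Your proposal is correct, and it takes a genuinely different route from the paper. The paper proves Proposition \ref{prop-d} constructively in Subsections 3.2--3.4: using the equivalence classification it puts the $\delta$-lines into a normal form (Figures 1--3), exhibits explicit permutations lying in $G_{\delta}$ (e.g.\ $(x,y,z)(x',y',z')$ and a $7$-cycle for $\ind(\delta)=0$, the involutions $g_x,g_y,g_z$ for $\ind(\delta)=1$), and then identifies $G_{\delta}$ by squeezing it against the subgroup lattice of $\GL(3,2)$. You instead argue structurally: for $\ind(\delta)=3$ via the rigidity of triangle completions and the two triangle-decompositions of $K_{2,2,2}$ (which, pleasantly, also re-derives the concurrency of the three common lines that the paper simply cites); for $\ind(\delta)=1$ via the identification of $L$ with the three pair-partitions of the off-$L$ points and the centralizer of a fixed-point-free $\tau\in S_3$, whose preimage under $S_4\twoheadrightarrow S_3$ is $A_4$; and for $\ind(\delta)=0$ via orbit counting on the $30$ Fano structures. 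Both approaches are sound. Yours is cleaner and avoids the figure-chasing, at the cost of leaning harder on the classification of indices $\{0,1,3,7\}$ (needed to certify that exactly $30-1-7-14=8$ planes have index $0$) and of leaving two small bookkeeping steps compressed: the count of $14$ index-$1$ planes (which does follow from orbit--stabilizer once your index-$1$ case and the single-orbit property are in hand), and the exclusion of an orbit of size $7$ in the index-$0$ case, where the mixed possibility that $G_{\delta}$ is a point stabilizer for $\F$ but a line stabilizer for $\F_{\delta}$ should be noted and dismissed by comparing point-orbit sizes ($\{1,6\}$ versus $\{3,4\}$). Be aware, though, that the paper's explicit generators are not incidental: they are reused throughout Sections 4 and 5 (e.g.\ the order-$3$ and order-$7$ elements of $G(Z)$ and the involutions fixing a line pointwise appear in the orbit computations), so replacing the constructive proof by your counting argument would require reinstating those explicit elements elsewhere.
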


\subsection{The case \texorpdfstring{$\ind(\delta)=0$}{ind(δ)=0}}
In this case, every $\delta$-line is not a line. 
There are precisely three $\delta$-lines through every point and there is precisely one $\delta$-line through two distinct points
which implies that  every line intersects precisely six $\delta$-lines. 
Therefore, for every line $L$ there is a unique $\delta$-line disjoint with $L$;
this $\delta$-line  will be called {\it opposite} to $L$. 
Also, there is a unique point which is not on $L$ and not on the $\delta$-line opposite to $L$; this point is said to be  {\it opposite} to $L$. 

Let $L$ be a line formed by points $x,y,z$ and let $p$ be the point opposite to this line.
Denote by $x',y',z'$ the points on the lines joining $p$ with $x,y,z$ (respectively), see Fig.~\ref{subfig:0lines}.
Then $x',y',z'$ form the $\delta$-line opposite to $L$.

\begin{figure}[h!]
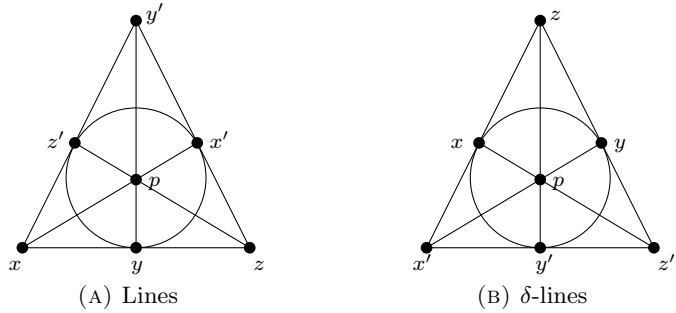

  \subfloat[Lines\label{subfig:0lines}]{\fano{x}{z}{y'}{y}{x'}{z'}{p}}
  \hfil
  \subfloat[$\delta$-lines\label{subfig:0deltalines}]{\fano{x'}{z'}{z}{y'}{y}{x}{p}}
  \caption{Lines and $\delta$-lines for $\ind(\delta)=0$}\label{fig:0case}
\end{figure} 

The $\delta$-line containing $x,y$ intersects the $\delta$-line $\{x',y',z'\}$ in a point. 
We assert that this intersecting point is distinct from $z'$. 
Indeed, if $\{x,y,z'\}$ is a $\delta$-line, then the $\delta$-line through $p,z'$ does not contain $x$ and as well as $y$ 
(the intersection of two distinct $\delta$-lines is a point); on the other hand, this $\delta$-line intersects $L$
(since it is not opposite to $L$) and, consequently, it contains $z$ which means that 
this $\delta$-line coincides with the line $\{p,z',z\}$, a contradiction.

The same arguments show that the $\delta$-lines through $x,z$ and $y,z$ intersect the $\delta$-line $\{x',y',z'\}$ in points distinct from $y'$ and $x'$, respectively.

Without loss of generality we assume that the $\delta$-line through $x,y$ is $\{x,y,y'\}$
(we have $G_{\delta f}=f^{-1}G_{\delta}f$ for every automorphism $f$ of $\F$ and take $f=(x,y)(x',y')$ if this $\delta$-line is $\{x,y,x'\}$).
If $\{x,z,z'\}$ is the $\delta$-line through $x,z$, then the $\delta$-line through $y,z$  is $\{y,z,x'\}$ which is impossible.
Therefore, 
$$\{x,y,y'\},\;\{y,z,z'\},\;\{x,z,x'\}$$
are the $\delta$-lines intersecting $L$ in two distinct points.

The $\delta$-line through $p,x'$ intersects $L$ in a point.
It is $\{p,x',y\}$; otherwise, this $\delta$-line coincides with the line $\{p,x',x\}$ or intersects the $\delta$-line
$\{x,z,x'\}$ in two points which is impossible. Using the same arguments, we determine the remaining two $\delta$-lines through $p$
and establish that 
$$\{p,x',y\},\;\{p,y',z\},\;\{p,z',x\}$$ 
are the $\delta$-lines through $p$. The seven $\delta$-lines are presented on  Fig.~\ref{subfig:0deltalines}.

Consider the automorphism
$$f=(x,y,z)(x',y',z')$$
of $\F$ which 
leaves $p$ fixed and permutes points of $L$ cyclically. It also
preserves the $\delta$-line $\{x',y',z'\}$ and induces the following two cycles on the set of $\delta$-lines:
$$(\{x,y,y'\},\{y,z,z'\},\{x,z,x'\})\qquad\text{and}\qquad  (\{p,x',y\},\{p,y',z\},\{p,z',x\}).$$ 
Thus, $f$ belongs to $G_{\delta}$.

Since $L$ is arbitrary chosen, 
for any line of $\F$  the group $G_{\delta}$ contains the automorphisms of $\F$ 
which leave fixed the point opposite to this line and permute points of the line cyclically.
For example, the line $\{p,x',x\}$ and the $\delta$-line $\{y,z,z'\}$  are disjoint which means that $y'$ is the point opposite to the  line $\{p,x',x\}$.
This means that
$$g=(x,p,x')(z',y,z)\quad\text{and}\quad gf=(x,z,p,x',y',y,z')$$
belong to $G_{\delta}$.
So, $G_{\delta}$ contains subgroups isomorphic to $C_3$ and $C_7$.

Note that $G_{\delta}$ is a proper subgroup of $\GL(3,2)$
(since $(y,z)(y',z')$ is an automorphism of $\F$ which is not an automorphism of $\F_{\delta}$, 
see Fig.~\ref{subfig:0lines},\ref{subfig:0deltalines}).
It is well-known that every maximal subgroup of $\GL(3,2)$ is isomorphic to $C_7{\rtimes}C_3$ or $S_4$ \cite[p.3]{Atlas}.
Also, the maximal subgroups of $C_7{\rtimes}C_3$ are precisely $C_3$ and $C_7$.
Therefore, $G_{\delta}$ coincides with a maximal subgroup of $\GL(3,2)$ isomorphic to $C_7{\rtimes}C_3$.

\subsection{The case \texorpdfstring{$\ind(\delta)=1$}{ind(δ)=1}}
In this case, there is precisely one line $L$ which is also a $\delta$-line. This line is invariant for $G_{\delta}$.
Denote by $S$ the complement of $L$.
All $\delta$-lines mentioned below are assumed to be distinct from $L$. 
For every point $x\in L$ precisely two of the six $\delta$-lines intersect $L$ in $x$.
If one of these two $\delta$-lines is $\{x\}\cup A$, where $A$ is a $2$-element subset of $S$,
then the second $\delta$-line is $\{x\}\cup (S\setminus A)$. 

Let $x,y,z$ be the points of $L$ and let $p$ be a point not on $L$. 
Denote by $x',y',z'$ the points on the lines joining $p$ with $x,y,z$, respectively.
Every permutation on $S=\{p,x',y',z'\}$ can be uniquely extended to an automorphism of $\F$.
Therefore, 
without loss of generality we assume that the six $\delta$-lines are 
$$\{x,z',p\}, \{x,x',y'\},\{y,x',p\}, \{y,y',z'\}, \{z, y',p\}, \{z, x',z'\},$$
\begin{figure}[ht!]
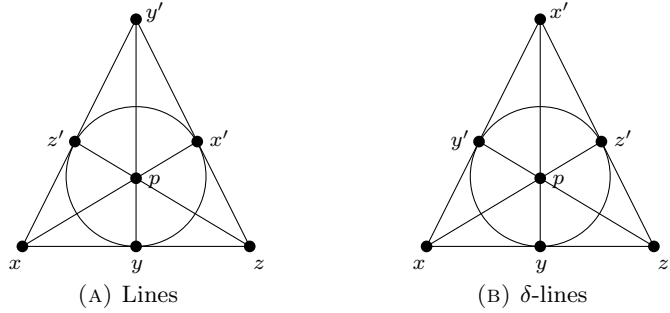

  \subfloat[Lines\label{subfig:1lines}]{\fano{x}{z}{y'}{y}{x'}{z'}{p}}
  \hfil
  \subfloat[$\delta$-lines\label{subfig:1deltalines}]{\fano{x}{z}{x'}{y}{z'}{y'}{p}}
  \caption{Lines and $\delta$-lines for $\ind(\delta)=1$}\label{fig:1case}
\end{figure}%
see Fig.~\ref{fig:1case}. A direct verification shows that 
$$g_x=(z',p)(x',y')\in G_{\delta}$$
preserves each 
$\delta$-line through $x$ and transposes 
$\delta$-lines  through $y$ and 
$\delta$-lines through $z$.
The elements
$$g_y=(x',p)(y',z')\qquad\text{and}\qquad g_z=(y',p)(x',z')\in G_{\delta}$$
act similarly. 
Since each of $g_x,g_y,g_z$ is the product of the remaining two, 
the group $\{e,g_x,g_y,g_z\}$ is isomorphic to $C^2_2$. 

If $g\in G_{\delta}$ leaves one of the points of $L$, say $x$,  fixed, 
then it  preserves each of the $\delta$-lines through $x$  or transposes them. 
In the first case, $g$ preserves each of  the subsets $\{z',p\},\{x',y'\}$ which means that it is identity or $g_x$ 
(it cannot transpose the elements in one of these subsets and, in the same time, leave the elements in the other one fixed). 
In the second case, $g$ transposes  $\{z',p\}$ and $\{x',y'\}$ which implies that $g$ is $g_y$ or $g_z$.

So, an element of $G_{\delta}$ leaves the points of $L$ fixed which guarantees that it belongs to $\{e,g_x,g_y,g_z\}$ or it permutes the points of $L$ cyclically.
For example,  $$(x,y,z)(x',y',z')\in G_{\delta}$$ satisfies the second condition
and generates a subgroup isomorphic to $C_3$.
Since $\{e,g_x,g_y,g_z\}$ is a normal subgroup of $G_{\delta}$ (as the kernel of the homomorphism sending every $f\in G_{\delta}$ to $f|_L$),
we obtain that $G_{\delta}$ is  isomorphic to $C^2_2{\rtimes} C_3=A_4$.

\subsection{The case \texorpdfstring{$\ind(\delta)=3$}{ind(δ)=3}}
In this case, there are precisely three lines which are also $\delta$-lines.
These are the lines passing through a certain  point $p$ (see \cite[Subsection 4.2]{PPZ2} for the description of bijections of index $3$).
Every element of $G_{\delta}$ leaves $p$ fixed. 

Let $x,y,z$ be the points on a  line not through $p$ 
Denote by $x',y',z'$ the points on the lines joining $p$ with $x,y,z$, respectively.
Consider the four $\delta$-lines which are not lines. Each of them does not contain $p$. 
For every point distinct from $p$ there are precisely two such $\delta$-lines passing through this point. 
Also, each of these $\delta$-lines intersects every line through $p$ precisely in one point
and its complement is the union of $p$ and a line. 
The latter means that
every such $\delta$-line can be transferred to $\{x',y',z'\}$ by an automorphism of $\F$ leaving $p$ fixed.
So, without loss of generality, we assume that one of the $\delta$-lines is $\{x',y',z'\}$.
Then the remaining three $\delta$-lines are 
$$\{x',y,z\}, \{x,y',z\}, \{x,y,z'\},$$
\begin{figure}[ht!]
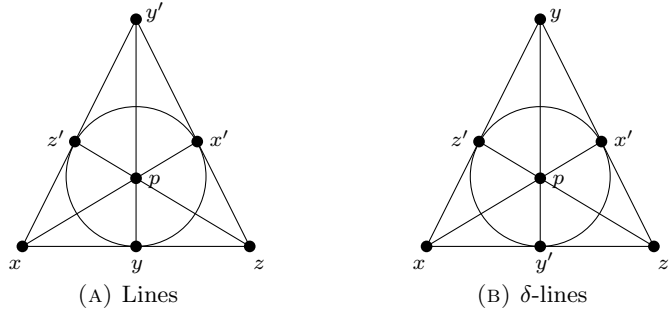

  \subfloat[Lines\label{subfig:3lines}]{\fano{x}{z}{y'}{y}{x'}{z'}{p}}
  \hfil
  \subfloat[$\delta$-lines\label{subfig:3deltalines}]{\fano{x}{z}{y}{y'}{x'}{z'}{p}}
  \caption{Lines and $\delta$-lines for $\ind(\delta)=3$}\label{fig:3case}
\end{figure}%
see Fig.~\ref{fig:3case}. 
Every permutation on the $\delta$-line $\{x',y',z'\}$ can be  extended to an automorphism of $\F$ as follows:
if $a'\in \{x',y',z'\}$ goes to $b'\in \{x',y',z'\}$, then we send $a\in \{x,y,z\}$ to $b\in \{x,y,z\}$ and leave $p$ fixed.
This automorphism preserves $\{x',y',z'\}$ and permutes the remaining three $\delta$-lines which are not lines.
All such automorphisms form a subgroup of $G_{\delta}$ isomorphic to $S_3$.
Also, $G_{\delta}$ contains $(x,y')(x',y)$.

It was noted above that $G_{\delta}$ is a subgroup of the stabilizer of $p$ in $\GL(3,2)$
which is a maximal subgroup of $\GL(3,2)$ isomorphic to $S_4$. 
Since $S_3$ is a maximal subgroup of $S_4$,
the group $G_{\delta}$ coincides with the stabilizer of $p$.

\section{Proof of Theorem \ref{theorem-main}}
\subsection{Structure of the set of blocks} 
Let ${\mathcal B}$ be a $15$-clique of the collinearity graph of $\P_4(15)$, i.e.\ 
the set of blocks of a symmetric $(15,8,4)$-design. 
We suppose that this clique is centered and $O$ is  a center point of ${\mathcal B}$, i.e.\ for every $B\in {\mathcal B}$ distinct from $O$
the line of $\P_4(15)$ joining $O$ and $B$ is contained in ${\mathcal B}$.
By \cite[Subsection 4.3]{PPZ2}, there is a Fano plane $\F_{O}$ formed by $4$-element subsets of $O^c=[15]\setminus O$
(note that $O^c$ contains precisely $7$ elements) and 
for every $7$-element subset $Z\subset O$ there is a Fano plane $\F_Z$ formed by $4$-element subsets of $Z$
such that ${\mathcal B}$ consists of $O$ and all 
$$X\cup \delta_Z(X),\; X\cup (O\setminus \delta_Z(X))\quad\text{with}\quad X\in \F_O,$$ 
where $\delta_Z$ is a certain bijection of $\F_O$ to $\F_Z$.
Note that $\F_O$ does not depend on $Z$. 
Also, for every $X\in \F_O$ the line joining $X\cup \delta_Z(X)$ and $X\cup (O\setminus \delta_Z(X))$ passes through $O$.

The line joining distinct $Y,Y'\in \F_Z$  consists of $Y,Y',Y\triangle Y'$ and each of these $4$-element subsets is contained in 
the $6$-element subset $Y\cup Y'$.
This provides a one-to-one correspondence between lines of $\F_Z$ and $6$-element subsets of $Z$.

For a $7$-element subset $Z'\subset O$ distinct from $Z$
the intersection $Z\cap Z'$ consists of $6$ elements. 
For every $Y\in \F_Z$ there is a unique $X\in \F_O$ such that $X\cup Y$ and $X\cup (O\setminus Y)$ belong to ${\mathcal B}$.
If $Y$ is on the line of $\F_Z$ corresponding to $Z\cap Z'$,
then $Z'$ contains $Y$ which means that $Y$ is also a point of $\F_{Z'}$. 
Therefore, the intersection of $\F_Z$ and $\F_{Z'}$ is the line corresponding to $Z\cap Z'$. 
If $Y$ is not on this line, then $Z'$ contains $O\setminus Y$
and, consequently,  $O\setminus Y$ is a point of $\F_{Z'}$.

\begin{rem}\label{remC2}
If $X\in \F_O$, then $X\cup \delta_{Z'}(X)$ is one of $X\cup \delta_Z(X)$, $X\cup (O\setminus \delta_Z(X))$.
Therefore, 
$$\delta_{Z'}(X)=\delta_Z(X)$$
if $\delta_Z(X)$  is on the common line $\F_Z\cap \F_{Z'}$ and
$$\delta_{Z'}(X)=O\setminus \delta_Z(X)$$
otherwise. 
\end{rem}

If the index of $\delta_Z$ is $7,3,1,0$,
then our clique is of the type (C1), (C2), (C3), (C4), respectively \cite[Subsection 4.3]{PPZ2}.
The index of $\delta_Z$ does not depend on $Z$;
in the cases (C1) and (C2) (when there is more then one center point), it does not depend on a center point.
If $\ind(\delta_Z)=3$, then there are precisely three lines of $\F_O$ whose images under $\delta_Z$ are lines of $\F_Z$;
each of these two triples of lines has a common point \cite[Section 4.2]{PPZ2}.

As in the previous section, $G_{\delta_Z}$ is the group of all automorphism of $\F_O$ preserving  the set of $\delta_Z$-lines. 
This is $\GL(3,2)$ or one of the subgroups from Proposition \ref{prop-d}.
Every element of $G_{\delta_Z}$ is induced by a unique permutation on $O^c$. 

Let $\D$ be the design  whose blocks form ${\mathcal B}$.
Every automorphism of $\D$ induces an automorphism of $\P_4(15)$ preserving ${\mathcal B}$ and, consequently, it preserves the set of center points of ${\mathcal B}$.
Let $\Aut(\D,O)$ be the group formed by all automorphisms of $\D$ preserving $O$. 
We have $\Aut(\D)=\Aut(\D,O)$ in the cases (C3) and (C4) (when there is only one center point).

Denote by $G(Z)$ the group of all automorphisms of $\D$ preserving $Z$.
This is a subgroup of $\Aut(\D,O)$
(since $O$ is the unique center point of ${\mathcal B}$ containing $Z$, every automorphism of $\D$ preserving $Z$ also preserves $O$). 

\begin{lemma}\label{lemmaO1}
The groups $G(Z)$ and $G_{\delta_Z}$ are isomorphic.
\end{lemma}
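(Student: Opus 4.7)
I would construct an isomorphism $\psi\colon G(Z)\to G_{\delta_Z}$ by sending $\phi\in G(Z)$ to the restricted permutation $\phi|_{O^c}$. Since $G(Z)\subset\Aut(\D,O)$, every $\phi\in G(Z)$ preserves $O$, $O^c$, the subset $Z$, and the singleton $\{y\}=O\setminus Z$. Because the points of $\F_O$ are exactly the $4$-subsets of $O^c$ of the form $B\cap O^c$ for $B\in{\mathcal B}\setminus\{O\}$, the restriction $\phi|_{O^c}$ permutes the points of $\F_O$; since it is a permutation of $O^c$, it preserves symmetric differences and hence the lines of $\F_O$, so $\phi|_{O^c}\in\Aut(\F_O)$.

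The crux of well-definedness is the identity $\phi\circ\delta_Z=\delta_Z\circ\phi|_{O^c}$ on $\F_O$. For $X\in\F_O$, the only blocks of $\D$ whose $O^c$-part is $X$ are the type~I block $X\cup\delta_Z(X)$ and the type~II block $X\cup(O\setminus\delta_Z(X))$; these are distinguished by whether the $O$-part avoids $y$ or contains $y$. Since $\phi$ preserves both $Z$ and $\{y\}$, it must map type~I blocks to type~I blocks and type~II blocks to type~II blocks, forcing $\phi(\delta_Z(X))=\delta_Z(\phi|_{O^c}(X))$ for every $X\in\F_O$. Equivalently, $\phi|_Z$ restricted to $\F_Z$ coincides with $\delta_Z\circ\phi|_{O^c}\circ\delta_Z^{-1}$; since $\phi|_Z$ is a permutation of $Z$ sending points of $\F_Z$ to points of $\F_Z$, it is an automorphism of $\F_Z$. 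By Lemma~\ref{lemma-d} we conclude $\phi|_{O^c}\in G_{\delta_Z}$, and $\psi$ is plainly a group homomorphism.

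For injectivity I would invoke the same principle as in Example~\ref{exmp-SS}: in the Fano plane $\F_Z$, every $e\in Z$ is the intersection of the four points of $\F_Z$ containing it, so any automorphism of $\F_Z$ is induced by a unique permutation of $Z$. If $\phi|_{O^c}=\mathrm{id}$, then $\phi$ fixes every $\delta_Z(X)$ setwise, which forces $\phi|_Z=\mathrm{id}$; together with $\phi(y)=y$ this gives $\phi=\mathrm{id}$. For surjectivity, given $f\in G_{\delta_Z}$, Lemma~\ref{lemma-d} ensures that $\delta_Z f\delta_Z^{-1}$ is an automorphism of $\F_Z$, hence induced by a unique permutation $g$ of $Z$; I would then define $\phi\colon[15]\to[15]$ to equal $f$ on $O^c$, $g$ on $Z$, and to fix $y$, and routinely verify via the relation $g(\delta_Z(X))=\delta_Z(f(X))$ that $\phi(O)=O$ and that $\phi$ carries each type~I (resp.\ type~II) block to a type~I (resp.\ type~II) block.

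The main obstacle is the well-definedness step, specifically ruling out that $\phi$ could exchange a type~I block with a type~II block at some $X\in\F_O$. This is precisely where the hypothesis $\phi\in G(Z)$, rather than the weaker $\phi\in\Aut(\D,O)$, is essential: the two block types are distinguished by whether their $O$-part contains the element $y\in O\setminus Z$, and only preservation of $Z$ rules out the exchange and thereby secures the identity $\phi\circ\delta_Z=\delta_Z\circ\phi|_{O^c}$ on which the whole argument pivots.
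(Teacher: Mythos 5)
Your proposal is correct and follows essentially the same route as the paper: restriction to $O^c$ gives the map into $G_{\delta_Z}$ via the conjugation identity and Lemma~\ref{lemma-d}, and the inverse is built by gluing $f$ on $O^c$ with the unique permutation of $Z$ inducing $\delta_Z f\delta_Z^{-1}$. Your explicit justification of why a type~I block cannot be sent to a type~II block (via preservation of the singleton $O\setminus Z$) is a detail the paper leaves implicit, but the argument is the same.
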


\begin{proof}
Every $g\in G(Z)$ preserves $O,O^c,Z$ and induces automorphisms $f$ and $f'$ of $\F_O$ and $\F_Z$, respectively. 
Since for every $X\in \F_O$ it sends $X\cup \delta_Z(X)$ to $g(X)\cup\delta_Z (g(X))$,
we obtain that 
$\delta_Z f\delta^{-1}_Z=f'$ and, consequently, $f\in G_{\delta_Z}$ by Lemma \ref{lemma-d}.

Conversely, if $f\in G_{\delta_Z}$, then Lemma \ref{lemma-d} shows that $f'=\delta_Z f\delta^{-1}_Z$ is an automorphism of $\F_Z$. 
The automorphisms $f$ and $f'$ are induced by permutations on $O^c$ and $Z$, respectively.
The union of these permutations gives an element of $G(Z)$.
\end{proof}

\subsection{Action of the group \texorpdfstring{$C^3_2$}{C-3-2}. Proof in the cases (C3), (C4)} 
For every line $L\subset \F_Z$ there is a $7$-element subset $Z'\subset O$
such that  $\F_Z$ intersects $\F_{Z'}$ precisely in  $L$.
Suppose that 
$$Z\cap Z'=\{a_1,a_2,b_1,b_2,c_1,c_2\}$$
and the elements of $Z$ and $Z'$ which do not belong to $Z\cap Z'$ are $d$ and $d'$, respectively.
We assume that 
$$Y_1=\{a_1,a_2,b_1,b_2\},\quad Y_2=\{b_1,b_2,c_1,c_2\},\quad Y_3=Y_1\triangle Y_2=\{a_1,a_2,c_1,c_2\}$$
form the line  $L=\F_Z\cap \F_{Z'}$. 
Every $Y\in \F_Z\setminus  \F_{Z'}$ contains $d$
and precisely one of $x_1,x_2$ for each $x\in \{a,b,c\}$
(since $Y$ intersects each $Y_i$ precisely in two elements)
and $O\setminus Y$ is a point of $\F_{Z'}$.
The permutation 
$$\alpha_L=(a_1,a_2)(b_1,b_2)(c_1,c_2)(d,d')$$
preserves each $Y_i$ and transposes every $Y\in \F_Z\setminus  \F_{Z'}$ and $O\setminus Y$.
Therefore, it sends $\F_Z$ to $\F_{Z'}$ and vice versa. 
If $X_i$, $i\in\{1,2,3\}$ is the preimage of $Y_i$ under $\delta_Z$,
then $\alpha_L$ preserves $O$ and all
$$X_i\cup Y_i,\quad X_i\cup (O\setminus Y_i).$$
For every $X\in \F_{O}$ distinct from $X_i$ it transposes 
$$X\cup \delta(X)\qquad\text{and}\qquad X\cup (O\setminus \delta(X)).$$
So, $\alpha_L$ is an automorphism of $\D$ preserving each line of $\P_4(15)$ contained in ${\mathcal B}$ and passing through $O$.
Furthermore, it preserves each point on three of these lines and  transposes the points distinct from $O$ on the remaining four lines.

\begin{lemma}
For any mutually distinct lines $L_1,L_2,L_3\subset \F_Z$
passing through a common point
$$\alpha_{L_1}\alpha_{L_2}=\alpha_{L_2}\alpha_{L_1}=\alpha_{L_3}.$$
\end{lemma}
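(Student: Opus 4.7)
The plan is to reduce the identity to a short symbolic calculation after establishing one structural fact about the three lines through $Y$ in $\F_Z$. First I would set notation: let $Y$ be the common point, and write $Z \setminus Y = \{z_1, z_2, z_3\}$. Since $|O| = 8 = |Z|+1$, there is a unique element $d' \in O \setminus Z$. For each line $L_i$, the associated $7$-subset $Z'_i$ from the construction satisfies $Z \cap Z'_i = T_i$, where $T_i$ is the $6$-subset of $Z$ corresponding to $L_i$. As $Y \in L_i$, we have $T_i = Y \cup \{z_j, z_k\}$ with $\{i,j,k\} = \{1,2,3\}$. Hence, in the definition of $\alpha_{L_i}$, the element ``$d$'' is $z_i$ while the element ``$d'$'' is the \emph{same} for all three lines.

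Next I would encode the action of each $\alpha_{L_i}$ on $O$ using the pair structure provided by $L_i$. The three points of $L_i$ partition $T_i$ into three pairs; one is $\{z_j, z_k\}$ (the pair complementary to $Y$ in $T_i$), and the other two form a partition $\pi_i$ of $Y$ into two pairs. Thus $\alpha_{L_i}$ acts on $O$ as the product of the two transpositions coming from $\pi_i$, the transposition $(z_j, z_k)$, and $(z_i, d')$, and is the identity on $[15] \setminus O$. The key claim is that $\pi_1, \pi_2, \pi_3$ are three \emph{distinct} pair-partitions of $Y$, hence exhaust the three possibilities. Indeed, if, say, $\pi_1 = \pi_2$, they would share a common pair $P \subset Y$; then $P \cup \{z_2, z_3\} \in L_1$ and $P \cup \{z_1, z_3\} \in L_2$ are two distinct points of $\F_Z$ whose intersection has cardinality $3$. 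This contradicts the fact that any two distinct points of $\F_Z$ must intersect in exactly $2$ elements, because their symmetric difference (the third point on the joining line) must be a $4$-subset.

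With this in hand, one may label $Y = \{y_1, y_2, y_3, y_4\}$ so that $\pi_1, \pi_2, \pi_3$ are the three standard pair-partitions of $Y$. Then each $\alpha_{L_i}$ becomes an explicit product of four disjoint transpositions, and a direct composition shows that $\alpha_{L_1}\alpha_{L_2}$ acts on $O$ as the product of transpositions corresponding to $\pi_3$, $(z_1, z_2)$ and $(z_3, d')$, i.e.\ as $\alpha_{L_3}$. Commutativity is then automatic: each $\alpha_{L_i}$ is an involution, so from $\alpha_{L_1}\alpha_{L_2} = \alpha_{L_3}$ and the involutivity of $\alpha_{L_3}$ we obtain $\alpha_{L_2}\alpha_{L_1} = (\alpha_{L_1}\alpha_{L_2})^{-1} = \alpha_{L_3}$. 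The only genuinely non-routine step is the distinctness of the partitions $\pi_i$; everything else is bookkeeping about disjoint transpositions.
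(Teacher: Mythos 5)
Your proof is correct and is essentially the ``direct verification'' that the paper's one-line proof leaves to the reader: you make explicit that the three lines through the common point $Y$ induce the three distinct pair-partitions of $Y$ (the genuinely non-trivial observation, proved correctly via the fact that distinct points of $\F_Z$ meet in exactly two elements), after which the identity reduces to a Klein-four-group computation on the two disjoint $4$-sets $Y$ and $\{z_1,z_2,z_3,d'\}$. The only difference in route is cosmetic: you conclude equality directly at the level of permutations of $O$, whereas the paper checks agreement on the points of $\F_Z$ and then passes through the induced transformation of ${\mathcal B}$.
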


\begin{proof}
A direct verification shows that
$$\alpha_{L_1}\alpha_{L_2}(Y)=\alpha_{L_2}\alpha_{L_1}(Y)=\alpha_{L_3}(Y)$$
for every $Y\in \F_Z$. 
This implies that $\alpha_{L_1}\alpha_{L_2}$, $\alpha_{L_2}\alpha_{L_1}$, $\alpha_{L_3}$
induce the same transformation of ${\mathcal B}$ and, consequently, they are equal. 
\end{proof}

So,  all automorphisms $\alpha_L$ defined by the lines of $\F_Z$ form a group isomorphic to $C^3_2$.
We denote this group by $C^3_2(Z)$.

\begin{prop}\label{prop-FG}
For any $7$-element subsets $Z,Z'\subset O$ we have $C^3_2(Z)=C^3_2(Z')$. 
This is a normal subgroup of $\Aut(\D,O)$.
\end{prop}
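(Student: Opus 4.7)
The plan is to characterise $C^3_2(Z)$ intrinsically, in a way that makes no reference to $Z$. Set
$$G := \{g \in \Aut(\D,O) : g \text{ fixes every point of } [15]\setminus O\}.$$
The inclusion $C^3_2(Z)\subseteq G$ is immediate from the explicit formula for $\alpha_L$, which acts on $O$ only. Normality of $G$ in $\Aut(\D,O)$ is then a one-liner: for $h\in\Aut(\D,O)$ and $g\in G$, the conjugate $hgh^{-1}$ fixes $[15]\setminus O$ pointwise because $h^{\pm 1}$ preserves $[15]\setminus O$ setwise. So the real content is to prove $|G|\le 8$; combined with $|C^3_2(Z)|=8$, this will force $G=C^3_2(Z)$ for every choice of $Z$, yielding both parts of the proposition.

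To bound $|G|$, fix $g\in G$. For each $X\in\F_O$ the only blocks of $\D$ through $X$ are $X\cup\delta_Z(X)$ and $X\cup(O\setminus\delta_Z(X))$, and because $g$ fixes $X$ pointwise it must either stabilise this pair of blocks individually or swap them. Writing $Y=\delta_Z(X)$, this forces $g(Y)\in\{Y,O\setminus Y\}$ for every $Y\in\F_Z$. Define a parity map $T_g\colon\F_Z\to\mathbb{F}_2$ recording which alternative occurs. The set $\F_Z\cup\{\emptyset\}$ is a $3$-dimensional $\mathbb{F}_2$-subspace under $\triangle$, since for distinct $Y_1,Y_2\in\F_Z$ the third point of their Fano line is $Y_1\triangle Y_2\in\F_Z$. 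Using $g(Y_1\triangle Y_2)=g(Y_1)\triangle g(Y_2)$ and $O\triangle Y=O\setminus Y$ for $Y\subset O$, a short case check yields
$$T_g(Y_1\triangle Y_2)=T_g(Y_1)+T_g(Y_2),$$
so $T_g$ extends (by $T_g(\emptyset)=0$) to a linear functional, of which there are exactly $8$.

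It remains to show that $T_g$ determines $g$. If $T_g\equiv 0$, then $g$ fixes every $Y\in\F_Z$ setwise, so $g$ preserves $Z=\bigcup_{Y\in\F_Z}Y$ and fixes the single remaining element of $O\setminus Z$. Moreover, $g|_Z$ preserves every intersection $Y\cap Y'$ for distinct $Y,Y'\in\F_Z$ setwise; since $\F_Z$ is a symmetric $2$-$(7,4,2)$ design, these intersections exhaust all $2$-subsets of $Z$. An elementary argument shows that a permutation of a set of size $\ge 3$ that preserves every $2$-subset is the identity, so $g=e$. This gives $|G|\le 8=|C^3_2(Z)|$, hence $G=C^3_2(Z)$, completing the proof. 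The main conceptual obstacle is the linearity of $T_g$: once $\F_Z\cup\{\emptyset\}$ is recognised as a vector space under symmetric difference, the remaining steps reduce to short counting and the ``preserves every pair'' lemma.
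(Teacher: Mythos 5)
Your proof is correct in substance but takes a genuinely different route from the paper. The paper proves the conjugation identity $\alpha_{h(L)}=h\alpha_L h^{-1}$ for any $h\in\Aut(\D,O)$ carrying $\F_Z$ to $\F_{Z'}$, specializes $h$ to the involution $\alpha_L$ attached to the common line $L=\F_Z\cap\F_{Z'}$ to get $C^3_2(Z)=C^3_2(Z')$, and then feeds an arbitrary $h$ back into the same identity to get normality. You instead characterize $C^3_2(Z)$ intrinsically as the pointwise stabilizer $G$ of $[15]\setminus O$ in $\Aut(\D,O)$: normality and independence of $Z$ are then automatic, and the real work is the bound $|G|\le 8$, which you obtain from the parity functional $T_g$ on the $3$-dimensional $\mathbb{F}_2$-space $\F_Z\cup\{\emptyset\}$ together with the observation that a trivial $T_g$ forces $g$ to preserve every $2$-subset of $Z$ (every such subset being $Y\cap Y'$ for a unique pair of blocks of the $2$-$(7,4,2)$ biplane $\F_Z$), hence $g=e$. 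All of these steps check out. Your approach buys a sharper statement --- $C^3_2(O)$ is \emph{exactly} the pointwise stabilizer of $O^c$, something the paper only asserts as a property in its introductory outline --- while the paper's identity $\alpha_{h(L)}=h\alpha_L h^{-1}$ additionally records how $\Aut(\D,O)$ permutes the involutions $\alpha_L$, information your counting argument discards. One small patch is needed: to pass from ``$T_g\equiv 0$ implies $g=e$'' to ``$T_g$ determines $g$'' (and hence to $|G|\le 8$) you must note that $g\mapsto T_g$ is a group homomorphism, i.e.\ $T_{gh}=T_g+T_h$; this follows from the same two-by-two case check as your linearity in $Y$, using that $g$ preserves $O$ setwise so that $g(O\setminus Y)=O\setminus g(Y)$, but it should be stated explicitly.
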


\begin{proof}
We claim that 
$$hC^3_2(Z)h^{-1}=C^3_2(Z')$$
for every $h\in \Aut(\D,O)$ sending $\F_Z$ to $\F_{Z'}$.

If $L$ is a line in $\F_Z$, then $h(L)$ is a line in $\F_{Z'}$.
A direct verification shows that 
$$\alpha_{h(L)}(Y)=h\alpha_{L}h^{-1}(Y)$$
for every $Y\in \F_{Z'}$.
Since $\alpha_L$ and $\alpha_{h(L)}$ leave each element of  $O^c$ fixed,
$\alpha_{h(L)}$ and $h\alpha_{L}h^{-1}$ induce the same transformation of ${\mathcal B}$
and, consequently, $\alpha_{h(L)}=h\alpha_{L}h^{-1}$. 
This gives the claim.

If $L$ is the intersection of $\F_Z$ and $\F_{Z'}$, then $\alpha_L$ sends $\F_Z$ to $\F_{Z'}$ and
$$\alpha_LC^3_2(Z)\alpha_L=C^3_2(Z')$$
which  implies that  $C^3_2(Z)=C^3_2(Z')$ (since $\alpha_L$ is an element of  $C^3_2(Z)$). 

An arbitrary $h\in \Aut(\D,O)$ sends $\F_Z$ to a certain $\F_{Z''}$.
Then
$$hC^3_2(Z)h^{-1}=C^3_2(Z'')=C^3_2(Z)$$
which completes our proof.
\end{proof}

Since $C^3_2(Z)$ does not depend on $Z$,
we will denote this group by $C^3_2(O)$. 

\begin{prop}\label{prop-autO}
The group $\Aut(\D,O)$ 
is the semidirect product of the normal subgroup $C^3_2(O)$ and the subgroup $G(Z)$ for any $7$-element subset $Z\subset O$.
\end{prop}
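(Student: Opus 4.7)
The plan is to establish the three ingredients of an internal semidirect product decomposition of $\Aut(\D,O)$: normality of $C^3_2(O)$, triviality of the intersection $C^3_2(O) \cap G(Z)$, and the factorization $\Aut(\D,O) = C^3_2(O) \cdot G(Z)$. Normality is already Proposition \ref{prop-FG}, so only the intersection and the factorization remain.

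For the trivial intersection, I would use the explicit description of $\alpha_L$ from the construction of $C^3_2(Z)$. Every non-identity element of $C^3_2(O) = C^3_2(Z)$ has the form $\alpha_L = (a_1,a_2)(b_1,b_2)(c_1,c_2)(d,d')$ for some line $L \subset \F_Z$, where $d \in Z$ and $d' \notin Z$. In particular $\alpha_L(Z) = Z' \neq Z$, so $\alpha_L \notin G(Z)$, yielding $C^3_2(O) \cap G(Z) = \{e\}$.

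For the factorization, the key intermediate step is to show that each $h \in \Aut(\D,O)$ sends $\F_Z$ to $\F_{h(Z)}$. Since $h$ fixes $O$ setwise, it also fixes $O^c$, and hence induces a bijection on $\F_O$ (recovering $\F_O$ as the collection of $O^c$-parts of the $14$ blocks other than $O$). For $Y \in \F_Z$, write $Y = \delta_Z(X)$ with $X \in \F_O$; then $h(X \cup Y) = h(X) \cup h(Y)$ is a block whose $O^c$-part is $h(X) \in \F_O$ and whose $O$-part $h(Y)$ is a $4$-subset of $h(Z)$, forcing $h(Y) = \delta_{h(Z)}(h(X)) \in \F_{h(Z)}$. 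Writing $Z'' = h(Z)$, if $Z'' = Z$ then $h \in G(Z)$ and we are done with trivial factor in $C^3_2(O)$; otherwise $|Z \cap Z''| = 6$, and this $6$-subset corresponds to a unique line $L \subset \F_Z$ for which, by its very construction, $\alpha_L$ interchanges $Z$ and $Z''$. Then $\alpha_L h$ preserves $Z$, so $\alpha_L h \in G(Z)$, and $h = \alpha_L(\alpha_L h) \in C^3_2(O) \cdot G(Z)$.

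The main obstacle I expect is the verification that $h(\F_Z) = \F_{h(Z)}$, since this is where the interplay between the block structure of $\D$ and the Fano plane structures of the various $\F_Z$ is used most delicately; the remaining steps are short and essentially reduce to the explicit formula for $\alpha_L$ and to the correspondence between lines of $\F_Z$ and $6$-element subsets of $Z$.
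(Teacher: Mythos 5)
Your proof is correct and follows essentially the same route as the paper's: after noting normality (Proposition \ref{prop-FG}) and the trivial intersection, you obtain the factorization by composing $h$ with the involution $\alpha_L$ attached to the common line of $\F_Z$ and $\F_{h(Z)}$, exactly as in the paper. The only difference is that you spell out two details the paper leaves implicit, namely why $C^3_2(O)\cap G(Z)$ is trivial and why $h$ carries $\F_Z$ to $\F_{h(Z)}$, and both of these verifications are sound.
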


\begin{proof}
Since the intersection of $C^3_2(O)$ and $G(Z)$ is trivial, it suffices to show that 
$\Aut(\D,O)$ is spanned by these subgroups. 

If $h\in \Aut(\D,O)$ preserves $\F_Z$, then it belongs to $G(Z)$.
Consider the case when $h\in \Aut(\D,O)$ sends  $\F_Z$ to $\F_{Z'}$ and $Z'\ne Z$. 
If $L$ is the common line of $\F_Z$ and $\F_{Z'}$, 
then $\alpha_L$ transposes $\F_Z$ and $\F_{Z'}$.
Therefore, $\alpha_Lh$ preserves $\F_Z$ and, consequently, belongs to $G(Z)$.
So, $h$ is the composition of an element of $G(Z)$ and $\alpha_L$.
\end{proof}

Suppose that our design is of the type (C3) or (C4). 
Then $O$ is the unique center point in ${\mathcal B}$ and $\Aut(\D)=\Aut(\D,O)$.
Lemma \ref{lemmaO1} states that $G(Z)$ is isomorphic to $G_{\delta_Z}$ and, consequently, $G(Z)$ is isomorphic to $A_4$ or $C_7\rtimes C_3$
by Proposition \ref{prop-d}.
Then the statement of Theorem \ref{theorem-main} follows from Proposition \ref{prop-autO}.

\subsection{Proof in the case (C2)}
Suppose that $\ind(\delta_Z)=3$. Then $\delta_Z$ sends the three lines through a certain $X\in \F_O$
to the lines $L_1,L_2,L_3\subset \F_Z$ passing through $\delta(X)$. 
By Subsection 3.4, $G_{\delta_Z}$ is the stabilizer of $X$ in $\GL(3,2)$. 
This implies that every element of $G(Z)$ preserves $\delta(X)$ (see the proof of Lemma \ref{lemmaO1}).

By \cite[Subsection 4.3]{PPZ2}, ${\mathcal B}$ is the union of the Fano planes $\F_i$, $i\in\{1,2,3\}$ obtained from  the lines $L_i$ as follows.
For every $i\in \{1,2,3\}$ we choose $X_i, \tilde{X_i}\in \F_O$  such that $\delta(X_i),\delta(\tilde{X_i})$ are points on the line $L_i$ distinct from $\delta(X)$.
Then 
$$O, X'\cup\delta(X'), X'\cup(O\setminus\delta(X')),\quad\text{where}\quad X'\in \{X,X_i, \tilde{X_i}\},$$
form a Fano plane in $\P_4(15)$ denoted by $\F_i$,  see Fig.~\ref{fig:FanoC2}.
This follows easily  from the fact that 
$$X,X_i,\tilde{X_i}\qquad\text{and}\qquad \delta(X),\delta(X_i),\delta(\tilde{X_i})$$
form lines in $\F_O$ and $\F_Z$ (respectively) for every $i$. 
The line $\mathcal O$ consisting of 
$$O, O'=X\cup\delta(X), O''=X\cup(O\setminus\delta(X))$$ 
is a common line of the planes and
each point on this line is a center point of ${\mathcal B}$. 
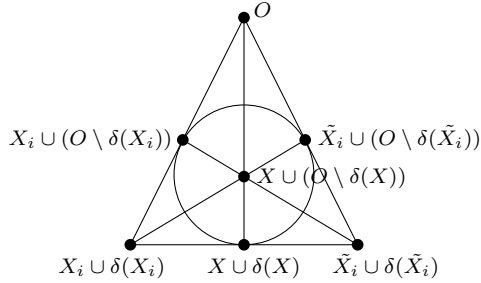
\begin{figure}[h!]
  \centering
  \footnotesize
  \setlength{\unitlength}{0.0003in}
  \begin{picture}(4066,4500)(0,-550)
    \put(2033,1283){\ellipse{2400}{2400}}
    \put(83,83){\circle*{200}}
    \put(2033,83){\circle*{200}}
    \put(3983,83){\circle*{200}}
    \put(983,1883){\circle*{200}}
    \put(2033,1253){\circle*{200}}
    \put(3083,1883){\circle*{200}}
    \put(2033,3983){\circle*{200}}
    \path(2033,3983)(3983,83)(83,83)(2033,3983)
    \path(3983,83)(983,1883)
    \path(2033,3983)(2033,83)
    \path(83,83)(3083,1883)
    \put(-1150,-400){$X_i\cup\delta(X_i)$}
    \put(3550,-400){$\tilde{X_i}\cup\delta(\tilde{X_i})$}
    \put(2200,4000){$O$}
    \put(1400,-400){$X\cup\delta(X)$}
    \put(3300,1800){$\tilde{X_i}\cup(O\setminus\delta(\tilde{X_i}))$}
    \put(-2000,1800){$X_i\cup(O\setminus\delta(X_i))$}
    \put(2250,1150){$X\cup(O\setminus\delta(X))$}      
  \end{picture}
  \caption{The Fano plane $\F_i$}\label{fig:FanoC2}
\end{figure}

Denote by $\Aut(\D,{\mathcal O})$ the group of all automorphisms of $\D$  preserving each point on the line ${\mathcal O}$.
Since every automorphism of $\D$ preserves this line (not necessarily pointwise), 
this is a normal subgroup of $\Aut(\D)$.

It is easy to see that $\Aut(\D,{\mathcal O})$ consists of all elements of $\Aut(\D,O)$ preserving $\delta(X)$.
In particular, $G(Z)$ is a subgroup of $\Aut(\D,{\mathcal O})$.

For the same reason $\alpha_L$ with $L\subset \F_Z$ belongs to $\Aut(\D,{\mathcal O})$ if and only if $\delta(X)$ is on the line $L$, i.e.\ $L$ is one of the lines $L_i$.
Therefore, 
\begin{equation}\label{eqCO}
C^3_2(O)\cap \Aut(\D,{\mathcal O})=\{e, \alpha_{L_1},\alpha_{L_2},\alpha_{L_3}\}.
\end{equation}
This group is isomorphic to $C^2_2$. 
Since $C^3_2(O)$ is a normal subgroup of $\Aut(\D,O)$,
\eqref{eqCO} is a normal subgroup of $\Aut(\D,{\mathcal O})$.

\begin{prop}\label{propC2}
The group $\Aut(\D,{\mathcal O})$ is the semidirect product of the normal subgroup \eqref{eqCO}
and the subgroup $G(Z)$.
\end{prop}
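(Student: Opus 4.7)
The plan is to restrict the semidirect product decomposition of $\Aut(\D,O)$ from Proposition \ref{prop-autO} down to the subgroup $\Aut(\D,{\mathcal O})$. All the necessary pieces have already been assembled in the paragraphs immediately preceding the statement, so the proof is essentially an accounting exercise.

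First I would recall the three facts collected above: $G(Z)\subseteq\Aut(\D,{\mathcal O})$ (because every element of $G(Z)$ preserves $\delta(X)$, hence every point of ${\mathcal O}$); the intersection $C^3_2(O)\cap\Aut(\D,{\mathcal O})$ equals the four-element group in \eqref{eqCO}, since $\alpha_L$ fixes $\delta(X)$ precisely when $L\in\{L_1,L_2,L_3\}$; and this group is normal in $\Aut(\D,{\mathcal O})$, being the intersection of the normal subgroup $C^3_2(O)\trianglelefteq\Aut(\D,O)$ with $\Aut(\D,{\mathcal O})$. Triviality of \eqref{eqCO}$\,\cap\, G(Z)$ is immediate, because \eqref{eqCO} sits inside $C^3_2(O)$ and $C^3_2(O)\cap G(Z)=\{e\}$ by Proposition \ref{prop-autO}.

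The key step is to check that $\Aut(\D,{\mathcal O})$ is generated by \eqref{eqCO} and $G(Z)$. Take an arbitrary $h\in\Aut(\D,{\mathcal O})\subseteq\Aut(\D,O)$ and apply Proposition \ref{prop-autO} to write $h=\alpha g$ with $\alpha\in C^3_2(O)$ and $g\in G(Z)$. Since $g$ fixes $\delta(X)$ and so does $h$, the element $\alpha=hg^{-1}$ fixes $\delta(X)$ as well; thus $\alpha\in C^3_2(O)\cap\Aut(\D,{\mathcal O})$, which is \eqref{eqCO}. Combined with the trivial intersection, this yields the desired internal semidirect product decomposition.

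I do not expect any serious obstacle: the heavy lifting has been done in Propositions \ref{prop-FG} and \ref{prop-autO}, and in the explicit identification of which $\alpha_L$ preserve $\delta(X)$. The only point worth stating cleanly is that the decomposition $h=\alpha g$ obtained from $\Aut(\D,O)$ automatically has $\alpha\in\Aut(\D,{\mathcal O})$ once $h$ does, which is the one line of actual content in the argument.
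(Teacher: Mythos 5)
Your proof is correct and follows essentially the same route as the paper: trivial intersection plus generation, using the facts (containment of $G(Z)$, the identity \eqref{eqCO}, and normality) established in the preceding paragraphs. The only cosmetic difference is that you obtain the factorization $h=\alpha g$ by citing Proposition \ref{prop-autO} and then observe that $\alpha=hg^{-1}$ must preserve $\delta(X)$, whereas the paper re-runs the geometric argument (the common line of $\F_Z$ and $\F_{Z'}$ must be one of the $L_i$); both are valid and amount to the same computation.
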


\begin{proof}
Since the intersection of the subgroups is trivial, we need to show that they generate $\Aut(\D,{\mathcal O})$.

In the case when $h\in \Aut(\D,{\mathcal O})$ preservers $\F_Z$,  it belongs to $G(Z)$. 
If $h\in \Aut(\D,{\mathcal O})$ sends $\F_Z$ to a certain $\F_{Z'}$ and $Z'\ne Z$,
then $\delta(X)$ is a common point of $\F_Z$ and $\F_{Z'}$
(since $h$ preservers $\delta(X)$).
This means that $\F_Z\cap \F_{Z'}$ is one of the lines $L_i$ 
and  the corresponding $\alpha_{L_i}$ transposes $\F_Z,\F_{Z'}$.
Then $\alpha_{L_i}h$ preserves $\F_Z$ and, consequently,  it belongs to $G(Z)$.
So, $h$ is the composition of elements from $G(Z)$ and \eqref{eqCO}.
\end{proof}

By Proposition \ref{prop-d} and Lemma \ref{lemmaO1}, $G(Z)$ is isomorphic to $S_4$.
Then Proposition  \ref{propC2} shows that $\Aut(\D,{\mathcal O})$ is isomorphic to $C^2_2\rtimes S_4$. 

Consider a line $L\subset \F_Z$ which does not pass through $\delta(X)$.  
This line intersects each $L_i$ in a point distinct from $\delta(X)$
and we can assume that $L$ consists of the points $\delta(X_i)$, $i\in\{1,2,3\}$ without loss of generality.
Then $\alpha_L$ acts on the Fano planes $\F_i$ as follows: 
it preserves each point on the line
$$\{O,\; X_i\cup\delta(X_i),\; X_i\cup(O\setminus\delta(X_i))\},$$ 
transposes the center points 
$$O'=X\cup \delta(X),\quad O''=X\cup (O\setminus \delta(X))$$ 
and the points  
$$\tilde{X}_i\cup\delta(\tilde{X}_i),\quad \tilde{X}_i\cup(O\setminus\delta(\tilde{X}_i)).$$ 
 Since $O'$ is also a center point of ${\mathcal B}$, we can repeat the above reasoning for any $7$-element subset $Z'\subset O'$  
and choose a line $L'\subset \F_{Z'}$ such that $\alpha_{L'}$ acts on each $\F_i$ as follows: 
it preserves all points on a certain line passing through $O'$, transposes the center points $O,O''$ and the remaining two points.
 
\begin{lemma}\label{lemmaS3}
The subgroup generated by $\alpha_L$ and $\alpha_{L'}$ is isomorphic to $S_3$. 
\end{lemma}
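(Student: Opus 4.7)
The plan is to identify $H := \langle \alpha_L, \alpha_{L'}\rangle$ as the dihedral group $D_3 = S_3$ via its action on the triple of center points $\mathcal{O} = \{O, O', O''\}$.

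First, I would read off the actions of the two generators on $\mathcal{O}$ from the construction preceding the lemma: $\alpha_L$ fixes $O$ and transposes $O'$ with $O''$, while $\alpha_{L'}$ fixes $O'$ and transposes $O$ with $O''$. Restriction to $\mathcal{O}$ yields a homomorphism
$$\pi \colon H \longrightarrow \mathrm{Sym}(\mathcal{O}) \cong S_3.$$
Since any two distinct transpositions generate $S_3$, the map $\pi$ is surjective and in particular $|H| \ge 6$.

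Second, both generators are involutions (each an explicit product of four disjoint transpositions on $[15]$), so $H$ is a finite dihedral group $D_n$ of order $2n$, where $n$ is the order of $\rho := \alpha_L\alpha_{L'}$. As $\pi(\rho)$ is a $3$-cycle on $\mathcal{O}$, the integer $n$ is a multiple of $3$, so it suffices to prove $\rho^3 = e$. Once this is in hand, the Coxeter presentation $S_3 = \langle s, t \mid s^2 = t^2 = (st)^3 = e\rangle$ yields a surjection $S_3 \twoheadrightarrow H$ whose composition with $\pi$ is an automorphism of $S_3$, forcing $\pi$ to be an isomorphism and hence $H \cong S_3$.

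The main obstacle is the verification $\rho^3 = e$. For this I would exploit the explicit form of the generators as permutations of $[15]$: $\alpha_L$ is supported on $O$ (in the notation of Subsection~4.2 one has $Z\cup Z' = O$, since $|Z|=|Z'|=7$ and $|O|=8$ force $|Z\cap Z'|=6$), and symmetrically $\alpha_{L'}$ is supported on $O'$. The intersection $O\cap O'$ has four elements, the two set-differences $O\setminus O'$ and $O'\setminus O$ each have four elements, and the three elements of $[15]\setminus(O\cup O')$ are fixed by both generators. Tracing the cycle structure of $\rho$ on the twelve-element set $O\cup O'$, I expect the interaction of the two products of transpositions to decompose $\rho$ into four disjoint $3$-cycles compatible with $\pi(\rho) = (O, O'', O')$, whence $\rho^3 = e$ and the proof is complete.
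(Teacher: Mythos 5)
Your reduction is sound and coincides with the opening move of the paper's proof: since $\alpha_L$ and $\alpha_{L'}$ are involutions and their restrictions to ${\mathcal O}=\{O,O',O''\}$ are distinct transpositions, everything comes down to showing that $\rho=\alpha_L\alpha_{L'}$ has order exactly $3$. The problem is that this decisive step is not actually carried out in your proposal: you write that you ``expect'' the cycle structure of $\rho$ on the twelve-element set $O\cup O'$ to consist of four disjoint $3$-cycles, but that expectation is precisely the statement $\rho^3=e$ in disguise, so nothing has been proved. What the set-theoretic bookkeeping you describe does give for free is that $\rho$ cyclically permutes the three $4$-sets $O\setminus\delta(X)$, $\delta(X)=O\cap O'$ and $X=O'\setminus O$, which only shows that $3$ divides the order of $\rho$ --- something you already knew from the surjection onto $S_3$. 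To close the orbits into genuine $3$-cycles you would have to relate the explicit transpositions of $\alpha_{L'}$ (which are defined via the \emph{other} center point $O'$, its $7$-element subsets $Z'\subset O'$ and the Fano planes $\F_{Z'}$) to those of $\alpha_L$ on the common $4$-set $\delta(X)$; that compatibility is exactly the nontrivial content of the lemma and is nowhere established.

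The paper avoids this point-level computation entirely by working with the action on blocks. Both $\alpha_L$ and $\alpha_{L'}$ preserve each of the three Fano planes $\F_i\subset{\mathcal B}$, acting on it as an involution fixing a line pointwise; hence $\rho$ restricts to an automorphism of each $\F_i$ that fixes the intersection point of the two axes and permutes $O,O',O''$ cyclically. An element of $\GL(3,2)$ whose order is divisible by $3$ has order exactly $3$, so $\rho^3$ is trivial on every $\F_i$, hence on ${\mathcal B}$, hence is the identity automorphism of the design. If you want to salvage your route, you must either perform the explicit trace of $\rho$ through $O\cup O'$ using the definitions of $\alpha_L$ and $\alpha_{L'}$ in Subsection~4.2, or replace it with a structural argument of the kind above; as written, the proof stops exactly where the lemma begins.
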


\begin{proof}
Since $\alpha_L,\alpha_{L'}$ are involutions, we need to show that $\alpha_L\alpha_{L'}$ is of order $3$.
Observe that $\alpha_L\alpha_{L'}$ permutes $O,O',O''$ cyclically and preserves a certain point on each $\F_i$.
This point is the intersection of the lines  fixed pointwise under $\alpha_L$ and $\alpha_{L'}$. 
Therefore,  $(\alpha_L\alpha_{L'})^3$ acts on ${\mathcal B}$ trivially and, consequently, it is the identity.
\end{proof}
 
 Since every automorphism of $\D$  induces a permutation on the line ${\mathcal O}$, 
the group $\Aut(\D)$ is the semidirect product of the normal subgroup $\Aut(\D,{\mathcal O})$ and the subgroup generated by $\alpha_L,\alpha_{L'}$.
It was established above that $\Aut(\D,{\mathcal O})$ is isomorphic to $C^2_2\rtimes S_4$ and the second subgroup is isomorphic to $S_3$.

\section{Action on the sets of points and blocks}
A design is called $t$-{\it pyramidal} when it has an automorphism group which fixes $t$
points and acts sharply transitively on the remaining points (see, for example, \cite{BRT}).
The following statement shows that designs of the types (C1)-(C4) are $7$-pyramidal. 

\begin{prop}\label{prop-pyr}
  If\/ $\D$ is a design of one of the types {\rm (C1)-(C4)} and $O$ is a center point in the clique of the collinearity graph of $\Gamma_4(15)$ formed by  the blocks of $\D$, then $\Aut(\D)$ contains a subgroup isomorphic to $C^3_2$ which leaves every element of $O^c$ fixed and acts sharply transitively on $O$.
\end{prop}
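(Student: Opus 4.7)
The plan is to observe that the subgroup $C^3_2(O)$ constructed in Subsection~4.2 is precisely the required subgroup; the proposition is essentially a repackaging of the explicit properties of its generators. So the work is to verify, from the formulas, that $C^3_2(O)$ fixes $O^c$ pointwise and acts regularly on $O$.

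First I would recall that by Subsection~4.2, for any $7$-element subset $Z\subset O$, the group $C^3_2(Z)=C^3_2(O)$ is a subgroup of $\Aut(\D,O)\subset \Aut(\D)$ isomorphic to $C^3_2$, consisting of the identity together with the seven involutions $\alpha_L$, one for each line $L\subset\F_Z$. Each $\alpha_L$ was defined, for the unique $7$-subset $Z'\subset O$ with $\F_Z\cap \F_{Z'}=L$, by the formula
\[
\alpha_L=(a_1,a_2)(b_1,b_2)(c_1,c_2)(d,d'),
\]
where $Z\cap Z'=\{a_1,a_2,b_1,b_2,c_1,c_2\}$ and $\{d\}=Z\setminus Z'$, $\{d'\}=Z'\setminus Z$.

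Next I would observe that every element moved by $\alpha_L$ lies in $Z\cup Z'\subset O$, so $\alpha_L$ fixes every element of $[15]\setminus(Z\cup Z')$, and in particular every element of $O^c$. Since $|Z|=|Z'|=7$ and $|Z\cap Z'|=6$, a cardinality count gives $|Z\cup Z'|=8=|O|$, hence $Z\cup Z'=O$. Therefore $\alpha_L$ acts on $O$ as four disjoint transpositions and has no fixed points on $O$; a fortiori the action of $C^3_2(O)$ on $O$ is faithful, and the stabilizer in $C^3_2(O)$ of any point of $O$ is trivial.

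Finally I would conclude: by the orbit–stabilizer relation, each orbit of $C^3_2(O)$ on $O$ has size $|C^3_2(O)|=8=|O|$, so the action is transitive, and triviality of point stabilizers upgrades this to sharp transitivity (equivalently, the action is regular). Combined with the fact that $C^3_2(O)$ fixes $O^c$ pointwise, this yields the claim, for any of the types (C1)–(C4) (the construction of Subsection~4.2 is uniform in the type of the centered clique once a center point $O$ is fixed).

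I do not expect a real obstacle: the only step that might initially look non-obvious is the equality $Z\cup Z'=O$, but this is immediate from $7+7-6=8=|O|$. Everything else is a direct reading of the explicit form of $\alpha_L$.
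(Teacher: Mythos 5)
Your proposal is correct and follows the paper's own route: both identify the required subgroup as $C^3_2(O)$ from Subsection~4.2 and read off its action from the explicit form $\alpha_L=(a_1,a_2)(b_1,b_2)(c_1,c_2)(d,d')$ together with $Z\cup Z'=O$. The only cosmetic difference is at the end: you deduce sharp transitivity from fixed-point-freeness of the seven involutions plus orbit--stabilizer, whereas the paper exhibits directly, for each pair $i,j\in O$, the unique $\alpha_L$ transposing them; both are valid.
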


\begin{proof}
Consider the action of $C^3_2(O)$ described in Subsection 4.2. It leaves every element of $O^c$ fixed.
For any distinct $i,j\in O$ there is the unique pair of $7$-element subsets $Z,Z'\subset O$
such that $i\in Z\setminus Z'$ and $j\in Z'\setminus Z$. 
If $L$ is the common line of $\F_Z$ and $\F_{Z'}$, 
then $\alpha_L$ is the unique element of $C^3_2(O)$ transposing $i,j$.
\end{proof}

The action of the full automorphism group of a design of the type (C1) is flag-transitive. 
We determine the orbits of the action of the full automorphism group  for the remaining four designs. 

\begin{prop}
The following assertions are fulfilled:
\begin{enumerate}[\rm(1)]
\item If\/ $\D$ is a design of the type {\rm (C2)}, then each of the actions of $\Aut(\D)$
on the sets of points and blocks has precisely two orbits consisting of $3$ and $12$ elements. 
\item If\/ $\D$ is a design of the type {\rm (C3)}, then each of the actions of $\Aut(\D)$
on the sets of points and blocks has precisely one element fixed and two orbits consisting of $6$ and $8$ elements. 
\item If\/ $\D$ is a design of the type {\rm (C4)}, then the action of $\Aut(\D)$ on the set of points
has precisely two orbits consisting of\/ $7$ and $8$ elements;
also, $\Aut(\D)$ leaves one block fixed and acts transitively on the remaining blocks.  
\item If\/ $\D$ is a design of the type {\rm (NC)}, then  $\Aut(\D)$ leaves one point fixed and acts transitively on the remaining points;
the action of $\Aut(\D)$ on the set of blocks has precisely two orbits consisting of\/ $7$ and $8$ elements.
\end{enumerate}
\end{prop}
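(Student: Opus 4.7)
My plan starts from Proposition \ref{prop-pyr}, which yields a subgroup $C^3_2(O)\subset\Aut(\D)$ fixing $O^c$ pointwise and acting sharply transitively on $O$, for each of (C1)--(C4). This forces $O$ to be a single $\Aut(\D)$-orbit of size $8$ on points, and reduces the orbits on $O^c$ to the orbits of the quotient $\Aut(\D,O)/C^3_2(O)\cong G(Z)$. By the correspondence from Example \ref{exmp-SS} (each $i\in O^c$ determines the line $\ell_i\subset\F_O$ whose associated $6$-subset is $O^c\setminus\{i\}$), the $G(Z)$-action on $O^c$ is conjugate to its action on the $7$ lines of $\F_O$, which is the dual of its action on $\F_O$ as a subgroup of $\GL(3,2)$. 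Thus everything reduces to the orbits of $G(Z)$ on $\F_O$ and its lines (read off from Section~3), plus the extra $S_3$ factor in case (C2) and a duality argument for (NC).

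For type (C3), $G(Z)\cong A_4$ preserves the unique line of $\F_O$ that is also a $\delta_Z$-line, with orbits of sizes $3$ and $4$ on the points of $\F_O$; a direct check shows that on the $7$ lines it has orbits of sizes $1$ (the fixed line) and $6$. Hence the point-orbits on $[15]$ are of sizes $8,1,6$. The $14$ non-$O$ blocks decompose as $7$ pairs $\{X\cup\delta_Z(X),\,X\cup(O\setminus\delta_Z(X))\}$ indexed by $X\in\F_O$; for each $X$, some $\alpha_L$ with $\delta_Z(X)\notin L$ swaps the pair (by the analysis in Subsection~4.2), so $C^3_2(O)$ is transitive within each pair while $G(Z)$ permutes the $7$ pairs as it does $\F_O$, giving block-orbits of sizes $1,6,8$. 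For type (C4), $G(Z)\cong C_7{\rtimes}C_3$ contains a $7$-cycle and is therefore transitive on both points and lines of $\F_O$, producing point-orbits of sizes $7,8$; the same pair argument gives block-orbits of sizes $1,14$.

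For type (C2), the $S_3$-factor of $\Aut(\D)=\Aut(\D,{\mathcal O})\rtimes S_3$ acts transitively on the three center points $O,O',O''$, giving a block-orbit of size $3$. Writing $O' = X\cup\delta_Z(X)$ and $O'' = X\cup(O\setminus\delta_Z(X))$ for the distinguished $X\in\F_O$ from Subsection~4.3, the union telescopes to $O\cup O'\cup O'' = O\cup X$, which has size $12$ and complement $O^c\setminus X$ of size $3$. The $12$-element set is a single $\Aut(\D)$-orbit since $\Aut(\D,O)$ is already transitive on $O$ and $S_3$ is transitive on $\{O,O',O''\}$; the $3$-element complement is a single orbit since $G(Z)\cong S_4$, the stabiliser of $X$ in $\GL(3,2)$, acts transitively on $O^c\setminus X$ (a consequence of the $2$-transitivity of $\GL(3,2)$ on the $7$ points of $\F_O$, which transfers under $i\leftrightarrow\ell_i$ to transitivity on the three lines through $X$). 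The non-center block-orbit of size $12$ then follows from $S_4$ acting transitively on the $6$ points of $\F_O\setminus\{X\}$ (hence on the $6$ pairs of non-center blocks) combined with $C^2_2$ swapping within each pair.

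For type (NC), I invoke duality: an (NC)-design is dual to a (C4)-design and shares its full automorphism group, so the $\Aut(\D)$-orbits on points (respectively, blocks) of (NC) are exactly the $\Aut(\D)$-orbits on blocks (respectively, points) of (C4), yielding the stated sizes $1+14$ and $7+8$. The main obstacle will be the (C2) computation: verifying the telescoping $O\cup O'\cup O'' = O\cup X$ and the transitivity of the stabiliser of $X$ on $O^c\setminus X$, after which the rest follows from $C^3_2(O)$-transitivity on $O$ together with the explicit description of $\alpha_L$ from Subsection~4.2.
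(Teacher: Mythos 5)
Your argument is correct, and while its skeleton (use $C^3_2(O)$ for the $8$-orbit, reduce the rest to $G(Z)$, add the $S_3$ factor in case (C2), get (NC) by duality from (C4)) coincides with the paper's, you replace two of the paper's key steps by genuinely different ones. First, for (C2) and (C3) the paper deduces the point orbits from the block orbits (and vice versa) via self-duality of these designs, whereas you compute both actions directly: you get the $3$-point orbit in (C2) from the transitivity of the stabiliser of $X$ on the three lines through $X$, and the $1+6$ decomposition of $O^c$ in (C3) from the $A_4$-orbits on lines; correspondingly, you obtain the block orbits in (C2) and (C3) by the pairing argument (transitivity of $G(Z)$ on the relevant $G(Z)$-orbit of $\F_O$ plus an $\alpha_L$ swapping each pair $\{X'\cup\delta_Z(X'),\,X'\cup(O\setminus\delta_Z(X'))\}$), which the paper only uses in case (C4). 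Second, your systematic use of the equivariant bijection $i\leftrightarrow\ell_i$ between $O^c$ and the lines of $\F_O$ (so that the action on $O^c$ is the dual action of $G(Z)$ on $\F_O$) replaces the paper's explicit computation with the sets $A,B,C$ inside the invariant $6$-set in case (C3). Your route is slightly longer but more uniform and does not require knowing in advance which designs are self-dual; the paper's is shorter where self-duality is available. All the individual claims you rely on check out: the $A_4$-orbits on lines are indeed $1+6$, the telescoping $O\cup O'\cup O''=O\cup X$ is correct since $O'$ and $O''$ each consist of $X$ together with one of the two halves of $O$, and the $\alpha_L$ needed to swap a given pair exists because there are four lines of $\F_Z$ avoiding $\delta_Z(X')$.
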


\begin{proof}
(1)\; If $O_1,O_2,O_3$ are the center points in the clique of the collinearity graph of $\Gamma_4(15)$ 
consisting of the blocks of $\D$, then they  form an orbit. 
Since $\D$ is self-dual, there is an orbit consisting of $3$ points of $\D$. 
The group $C^3_2(O_i)$ acts sharply transitively on the elements of $O_i$  for every $i\in \{1,2,3\}$
and there is an automorphism of $\D$ sending $O_i$  to $O_j$ for any distinct $i,j$. 
Therefore, the $12$ elements of $O_1\cup O_2\cup O_3$ form an orbit and its complement is the mentioned above orbit consisting of $3$ points of $\D$. 
The $12$ blocks different from  $O_1,O_2,O_3$ form an orbit by duality.

(2)\; If $O$ is the unique center point in the clique of the collinearity graph of $\Gamma_4(15)$  formed by the blocks of $\D$, 
then it is fixed under the action of $\Aut(\D)$ and $C^3_2(O)$ acts sharply transitively on the elements of $O$. 
Therefore, the elements of $O$ form an orbit and, since $\D$ is self-dual, there is a fixed point belonging to $O^c$.
Let $X$ be the complement of this point in $O^c$. 
The subgroup $G(Z)\subset \Aut(\D)$ (where $Z$ is a $7$-element subset of $O$) is  isomorphic to the group $G_{\delta}$ considered in Subsection 3.3.
The $6$-element subset  $X\subset O^c$ corresponds to a line in  the Fano plane $\F_O$ and this line is invariant under the action of $G(Z)$.
Let $A,B,C$ be the $2$-element subsets of $X$ such that $A\cup B, B\cup C, A\cup C$ are the points of this line. 
The subgroup $G(Z)$ contains an element $f$ of order $3$ permuting these points cyclically (see Subsection 3.3). 
Then $f$ permutes $A,B,C$ cyclically which implies that the action of $f$ on $X$ has precisely two $3$-element orbits 
and every such orbit  intersects each of $A,B,C$ precisely in one element. 
Also, $G(Z)$ contains an involution which preserves every point on the line of $\F_O$ corresponding to $X$. This involution transposes elements of at least one of $A,B,C$
which means that $X$ is an orbit of the action of $G(Z)$.
So, the action of $\Aut(\D)$ on the set of points has one fixed point and the $6$-element and $8$-element orbits $X$ and $O$.
The orbits in the set of blocks have the same numbers of elements  by duality. 

(3)\; In contrast to the previous cases, $\D$ is not self-dual. 
If $O$ is the unique center point in the clique of the collinearity graph of $\Gamma_4(15)$  formed by the blocks of $\D$, then, as in the case (2), 
it is fixed under the action of $\Aut(\D)$ and  the elements of $O$ form an orbit.
The subgroup $G(Z)\subset \Aut(\D)$ is  isomorphic to the group $G_{\delta}$ considered in Subsection 3.2.
There is $f\in G(Z)$ of order $7$ which permutes the points of $\F_O$ cyclically.
The restriction of $f$ to $O^c$ is a $7$-cycle which means that $O^c$ is the second orbit in the set of points.
The blocks distinct from $O$ form an orbit. 
Indeed, for any  $X,X'\in \F_O$ the block $X\cup \delta_Z(X)$ can be transformed to $X'\cup \delta_Z(X')$
by a certain power of $f$ and for any line $L\subset \F_Z$ which does not contain $\delta(X)$
the involution $\alpha_L$ transposes  $X\cup \delta_Z(X)$ and $X\cup (O\setminus\delta_Z(X))$.

(4)\;
The statement follows from (3) and the fact that designs of the types (C4) and (NC) are dual.
\end{proof}


\subsection*{Acknowledgements}


The authors are grateful to Alessandro Montinaro and  Cheryl Praeger for their interest in this research and useful comments.


\end{document}